\def\@tocline#1#2#3#4#5#6#7{\relax
  \ifnum #1>\c@tocdepth 
  \else
    \par \addpenalty\@secpenalty\addvspace{#2}%
    \begingroup \hyphenpenalty\@M
    \@ifempty{#4}{%
      \@tempdima\csname r@tocindent\number#1\endcsname\relax
    }{%
      \@tempdima#4\relax
    }%
    \parindent\z@ \leftskip#3\relax \advance\leftskip\@tempdima\relax
    \rightskip\@pnumwidth plus4em \parfillskip-\@pnumwidth
    #5\leavevmode\hskip-\@tempdima
      \ifcase #1
      \or\or \hskip 2em \or \hskip 2homologyem \else \hskip 3em \fi%
      #6\nobreak\relax
    \dotfill\hbox to\@pnumwidth{\@tocpagenum{#7}}\par
    \nobreak
    \endgroup
  \fi}
\theoremstyle{plain}
\newtheorem{theorem}{Theorem}[section]
\newtheorem{lemma}[theorem]{Lemma}
\newtheorem{corollary}[theorem]{Corollary}
\newtheorem{proposition}[theorem]{Proposition}
\theoremstyle{definition}
\newtheorem{notation}[theorem]{Notation}
\newtheorem{remark}[theorem]{Remark}
\newtheorem{definition}[theorem]{Definition}
\newtheorem{example}[theorem]{Example}
\newtheorem{claim}[theorem]{Claim}
\numberwithin{equation}{section}
\newcommand{\hocolim}{\mathop{\rm hocolim}\limits} 
\newcommand{\inj}{\hookrightarrow}
\newcommand{\Ln}{{\rm L_{\rm Nis}}}
\newcommand{\Pic}{{\rm Pic}}
\newcommand{\Hom}{{\rm Hom}}
\newcommand{\Spec}{{\rm Spec \,}}
\newcommand{\Sing}{{\rm Sing}}
\renewcommand{\tilde}{\widetilde}
\newcommand{\Z}{{\mathbb Z}}
\newcommand{\A}{{\mathbb A}}
\renewcommand{\P}{{\mathbb P}}
\newcommand{\ssk}{\Delta^{op}(\@S h(Sm/k))}
\newcommand{\sA}{\@A}
\newcommand{\sB}{\@B}
\newcommand{\sC}{\@C}
\newcommand{\sO}{\@O}
\newcommand{\sX}{\@X}
\newcommand{\sY}{\@Y}
\newcommand{\sF}{\@F}
\newcommand{\sZ}{\@Z}
\newcommand{\sH}{\@H}
\newcommand{\sU}{\@U}
\def\<{\langle}
\def\>{\rangle} 
\def\-{\overline} 
\def\~{\widetilde}
\def\^{\widehat}
\def\@{\mathcal}
\def\!{\mathscr}
\def\#{\mathbb}
\def\_{\underline}
\begin{document}
\title{$\A^1$-connected stacky curves and the Brauer group of Moduli of elliptic curves}



\author{Neeraj Deshmukh}
\address{Institute of Mathematics, Polish Academy of Sciences,Ul.$\acute{S}$niadeckich 8, 00-656 Warsaw, Poland}
\email{ndeshmukh@impan.pl}

\author{Suraj Yadav}
\address{Universit\"{a}t Regensburg, Universit\"{a}tsstr. 31, 93040, Regensburg, Germany}
\email{suraj.yadav@ur.de}

\subjclass[2020]{14F42 (Primary)}
\keywords{$\A^1$-homotopy theory; Algebraic stacks; Brauer groups}
\date{\today}

\begin{abstract}
Given a smooth scheme $X$ with an action by an affine algebraic group $G$, we give a formula to compute the Nisnevich sheaf of the motivic connected components of the quotient stack $[X/G]$ in the case of an orbifold. We apply it to identify all the $\A^1$-connected stacky curves and prove the $\A^1$-connectedness of the moduli stack of elliptic curves. We also prove homotopy purity for the smooth algebraic stacks and recover a computation of the Picard group of stacky orbifold curves by computing their mixed motive.
\end{abstract}

\maketitle

\section{Introduction}

 The program to understand the unstable $\A^1$-homotopy type of varieties was initiated in \cite{amorel} with a particular focus on the case of surfaces. In particular all the $\A^1$-connected surfaces were identified and classified upto their $\A^1$-homotopy type. The case of threefolds is quite difficult and no description of all the $\A^1$-connected threefolds is known, though some partial results about classification of projective bundles on projective plane were proved in \cite{akw1}. On the other hand unstable $\A^1$- homotopy type of stacks in the spirit of \cite{amorel} has not been studied and the aim of this project is to understand the homotopy type of algebraic stacks. As a preliminary step we focus on understanding and computing homotopy sheaves $\pi_{i}^{\A^1}([X/G])$ of a given quotient stack $[X/G]$. These homotopy sheaves are the motivic analogue of the homotopy groups of a topological space in algebraic topology. F. Morel proved in \cite{Morel-book} that for a given space $\sY$ over a field, $\pi_i^{\A^1}(\sY)$ is strongly (resp. strictly) $\A^1$-invariant for $i=1$ (resp. $>1$), (See \cite[Definition 1.7, page 2]{Morel-book} for the definition) and conjectured that $\pi_0^{\A^1}(\sY)$ is an $\A^1$-invariant sheaf. Recently a counterexample was given in \cite{ayoub}. However no counterexamples are known to exist for the schemes or stacks yet. While for curves $\pi_0^{\A^1}$ is easily seen to be $\A^1$-invariant, the case of surfaces is already non trivial (see for instance \cite{bs}). In case of schemes $\pi_0^{\A^1}$ carries useful geometric information. For instance $\A^1$-connected proper smooth schemes are rationally connected. Furthermore in characteristic $0$, $\A^1$-connected smooth schemes have trivial Brauer group and $\acute{e}$tale fundamental group (\cite[Proposition 4.1.1, 4.1.2]{amorel}).

For classifying stacks $B_{\acute{e}t}G$ some computations have been done in \cite[Corollary 3.2, page 137]{mv} and \cite[Theorem 1.3]{ekw} with different conditions on the group. In particular $\pi_0^{\A^1}$ of such stacks is $\A^1$-invariant.  A natural generalisation of these results about classifying stacks is to compute $\pi_0^{\A^1}$ of higher dimensional quotient stacks and check the $\A^1$-invariance of $\pi_0^{\A^1}$.

In \cite{hy} authors study the $\A^1$-connected components of the moduli stack of vector bundles on a curve with an arbitrary fixed determinant and prove that it is $\A^1$-connected. As an application, the authors get a classification of projective bundles on a curve up to $\A^1$-weak equivalence.

In this paper, we begin by studying the quotient orbifolds and write down formulas for computing their $\pi_0^{\A^1}$.  As a first step, we have the following result:

\begin{theorem}\label{main1}
	Let $G$ be an algebraic group satisfying $\A^1$-invariance of torsors over fields and the statement of Grothendieck--Serre conjecture (see Conditions \ref{cond1} and \ref{condti2}). Suppose $G$ acts on a smooth scheme $X$ with trivial generic stabilisers. Let $[X/G]$ denote the quotient stack. Then the induced map $\pi_0^{\A^1}(X)/\pi_0^{\A^1}(G) \to \pi_0^{\A^1}([X/G])$ is an injection of Nisnevich sheaves. Moverover if $BG$ is $\A^1$-connected, $\pi_0^{\A^1}(X)/\pi_0^{\A^1}(G) \to \pi_0^{\A^1}([X/G])$ is an isomorphism.  
\end{theorem}

The construction of the morphism in the above theorem is discussed in the beginning of the Section \ref{sec3}. One pertinent example where $BG$ is $\A^1$-connected is $GL_n$ and for the applications to stacky curves this turns out to be sufficient. 
Next we study gerbes and the simplest examples of these are $\mu_n$-gerbes over $\P^d$ and we obtain the following result

\begin{theorem}\label{main2}
	Let $\P^d(\sqrt[n]{\sO(k)})$, $0 < k <n$ be a non trivial $\mu_n$-gerbe over $\P^d/F$(see Example \ref{2.9} for definition), where $F$ is an algebraically closed field of characteristic 0. Then $$\pi_{0}^{\A^1}(\P^d(\sqrt[n]{\sO(k)})) \simeq \pi_{0}^{\A^1}(B\mu_l)$$ where $l = gcd(k,n) $. In particular $\P^d(\sqrt[n]{\sO(k)})$ is $\A^1$-connected if $k$ and $n$ are coprime.

\end{theorem}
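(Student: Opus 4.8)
The plan is to realise the gerbe as a global quotient stack, to compute its $\pi_0^{\A^1}$ by comparing it with a classifying stack via a fibre sequence, and to read off the dependence on $l=\gcd(k,n)$ from the structure of the acting group.

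Starting from the definition in Example \ref{2.9}, I would first record a global presentation. Writing $\P^d=[V/\mathbb{G}_m]$ with $V=\A^{d+1}\setminus\{0\}$ and $\mathbb{G}_m$ acting with weight one, and using the standard description of a root gerbe as the fibre product $\P^d\times_{B\mathbb{G}_m}B\mathbb{G}_m$ (the first map classifying $\sO(k)$, the second induced by $z\mapsto z^{n}$), one obtains
$$\P^d\sqrt[n]{\sO(k)}\;\simeq\;[V/\Gamma],\qquad \Gamma=\ker\big(\mathbb{G}_m\times\mathbb{G}_m\xrightarrow{(s,t)\mapsto s^{k}t^{-n}}\mathbb{G}_m\big),$$
where $\Gamma$ acts on $V$ through the first projection $\mathrm{pr}_1\colon\Gamma\to\mathbb{G}_m$ (weight one). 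A short computation with character lattices gives $X^{*}(\Gamma)\cong\Z\oplus\Z/l$, so $\Gamma\cong\mathbb{G}_m\times\mu_l$ with $l=\gcd(k,n)$: the identity component is $\{(u^{n/l},u^{k/l})\}$ and the component group is $\mu_l$. Moreover every stabiliser of the action equals $1\times\mu_n$, confirming that $[V/\Gamma]$ is the asserted $\mu_n$-gerbe over $\P^d$.

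The key input is that $V=\A^{d+1}\setminus\{0\}$ is $\A^1$-connected for $d\ge 1$. Since the atlas $V\to[V/\Gamma]$ is a $\Gamma$-torsor, the classifying map sits in a fibre sequence $V\to[V/\Gamma]\to B\Gamma$. Granting that this stays a fibre sequence after $\A^1$-localisation, the exact sequence $\pi_1^{\A^1}(B\Gamma)\to\pi_0^{\A^1}(V)\to\pi_0^{\A^1}([V/\Gamma])\to\pi_0^{\A^1}(B\Gamma)\to\ast$ collapses (as $\pi_0^{\A^1}(V)=\ast$) to an isomorphism $\pi_0^{\A^1}([V/\Gamma])\simeq\pi_0^{\A^1}(B\Gamma)$. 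Because $\Gamma\cong\mathbb{G}_m\times\mu_l$ and $\pi_0^{\A^1}$ preserves finite products, $\pi_0^{\A^1}(B\Gamma)\simeq\pi_0^{\A^1}(B\mathbb{G}_m)\times\pi_0^{\A^1}(B\mu_l)$, and as $B\mathbb{G}_m\simeq\P^{\infty}$ is $\A^1$-connected the first factor is trivial, yielding $\pi_0^{\A^1}(\P^d\sqrt[n]{\sO(k)})\simeq\pi_0^{\A^1}(B\mu_l)$. Equivalently one can proceed in two steps through $[V/\Gamma]\simeq[[V/\mathbb{G}_m^{(n/l)}]/\mu_l]$: the inner $\mu_{n/l}$-gerbe $[V/\mathbb{G}_m^{(n/l)}]$ maps to $B\mathbb{G}_m$ with $\A^1$-connected fibre $V$, hence is itself $\A^1$-connected, after which the residual $\mu_l$-action contributes exactly $B\mu_l$.

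The main obstacle is justifying that $V\to[V/\Gamma]\to B\Gamma$ remains a fibre sequence after $\A^1$-localisation, that is, that exactness at the $\pi_0^{\A^1}$/$\pi_1^{\A^1}$ terms survives. This is the genuinely delicate point, since $\mu_l$-torsors are not $\A^1$-invariant and all the content of the theorem is carried by $\pi_0^{\A^1}(B\mu_l)\cong\mathbb{G}_m/(\mathbb{G}_m)^{l}$, the Nisnevich cokernel of $\mathbb{G}_m\xrightarrow{l}\mathbb{G}_m$ read off from the Kummer fibre sequence $B\mu_l\to B\mathbb{G}_m\xrightarrow{l}B\mathbb{G}_m$; in particular the computation must be carried out at the level of Nisnevich sheaves and cannot be reduced to $F$-points, over which everything is trivial because $F$ is algebraically closed. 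To secure the fibre sequence I would use that $\mathbb{G}_m$ is special and that $\mu_l$ is finite étale (equivalently linearly reductive, as $\Char F=0$), controlling the $\A^1$-localisation of the étale classifying stacks involved, with the $\A^1$-connectedness of $V$ supplying the rest. The final assertion is then immediate: if $\gcd(k,n)=1$ then $l=1$, $\mu_1$ is trivial, $\pi_0^{\A^1}(B\mu_1)=\ast$, and $\P^d\sqrt[n]{\sO(k)}$ is $\A^1$-connected.
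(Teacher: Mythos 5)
Your algebraic setup is correct and genuinely illuminating: the presentation $\P^d\sqrt[n]{\sO(k)}\simeq[V/\Gamma]$ with $\Gamma=\ker\bigl(\mathbb{G}_m^2\xrightarrow{(s,t)\mapsto s^kt^{-n}}\mathbb{G}_m\bigr)\cong\mathbb{G}_m\times\mu_l$ explains where $l=\gcd(k,n)$ comes from more transparently than the paper's proof does. But the step you yourself flag as delicate --- that $V\to[V/\Gamma]\to B\Gamma$ remains a fibre sequence after motivic localisation --- is a genuine gap, and the tools you invoke do not close it. Specialness of $\mathbb{G}_m$ is of no help, because the $\mu_l$-factor makes $\Gamma$-torsors only \'{e}tale-locally, not Nisnevich-locally, trivial. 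The standard inputs for finite \'{e}tale groups (Morel's lemma quoted in the paper, and the $\A^1$-locality of $B_{\acute{e}t}\mu_l$ from Morel--Voevodsky) produce fibre sequences of \emph{covering} type, i.e.\ with discrete fibre: $\Gamma\to V\to[V/\Gamma]$, or $\mu_l\to\sY\to[\sY/\mu_l]$ for $\sY=[V/\Gamma^{\circ}]$. These cannot compute $\pi_0^{\A^1}$ of the base from $\pi_0^{\A^1}$ of the total space: since $\sY\to[\sY/\mu_l]$ is not an epimorphism of Nisnevich sheaves, $\A^1$-connectedness of $\sY$ says nothing about connectedness of the quotient --- indeed the entire content of the theorem is that $[V/\Gamma]$ has a nontrivial sheaf $\pi_0^{\A^1}(B\mu_l)$ of components that are invisible on the cover. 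What you need is the ``dual'' sequence, with fibre $V$ and base $B\Gamma$, and preservation of such a sequence under $\mathrm{L}_{\mathrm{mot}}$ is exactly the kind of statement that fails in general; your closing sentence (``$\mathbb{G}_m$ is special and $\mu_l$ is finite \'{e}tale \dots{} with the $\A^1$-connectedness of $V$ supplying the rest'') is a restatement of the problem, not a mechanism. The same unproved claim reappears in your two-step variant as ``the residual $\mu_l$-action contributes exactly $B\mu_l$''. (Even granting the fibre sequence, you would still need surjectivity of $\pi_0^{\A^1}([V/\Gamma])\to\pi_0^{\A^1}(B\Gamma)$ and $\A^1$-connectedness of all \emph{twisted} fibres, not just the fibre over the base point, since $B\Gamma$ is not $\A^1$-connected; both hold --- the twists are complements of zero sections of vector bundles --- but are unaddressed.)

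The paper sidesteps precisely this issue by only ever using fibre sequences obtained as base changes of maps between $\A^1$-local spaces: Lemma \ref{3.7} shows $B_{\acute{e}t}\mu_n\to B\mathbb{G}_m\xrightarrow{\psi^n}B\mathbb{G}_m$ is an $\A^1$-fibre sequence because source and target are motivic spaces, and the root gerbe is its pullback along $\sO(k):\P^d\to B\mathbb{G}_m$; the computation then lives in the boundary map $\pi_1^{\A^1}(\P^d)\to\pi_0^{\A^1}(B\mu_n)$, whose image is $\rho_n(\mathbb{G}_m^k)$, with the gcd appearing via $\mathbb{G}_m^k\cdot\mathbb{G}_m^n=\mathbb{G}_m^l$. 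Note that your presentation fits this framework: one checks $[V/\Gamma]\simeq\P^d\times_{B\mathbb{G}_m}B\Gamma$, where $B\Gamma\to B\mathbb{G}_m$ is induced by the first projection and $\P^d\to B\mathbb{G}_m$ classifies $V\to\P^d$. But this square yields a motivically robust fibre sequence only in the direction over $\P^d$ (pulling back the map of $\A^1$-local objects $B\Gamma\to B\mathbb{G}_m$, with fibre $B_{\acute{e}t}\mu_n$), which collapses your argument back into the paper's proof; it does not yield the sequence over $B\Gamma$ with fibre $V$, because $\P^d\to B\mathbb{G}_m$ is not a map of $\A^1$-local spaces. So as written the proposal has a real hole exactly where the theorem's difficulty sits.
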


Along the way we also classify $\mathbb{G}_m$-torsors over projective spaces up to their unstable motivic homotopy type (see Proposition \ref{3.6}). One upshot of that computation is that $\mathbb{G}_m$-torsors are hardly ever $\A^1$-connected. Using the Theorem \ref{main1} and the methods used in the proof of Theorem \ref{main2} we obtain the following corollary.
\begin{corollary}
$\overline{\@M}_{1,1}$ is $\A^1$-connected. In particular for the Brauer group of $\overline{\@M}_{1,1}$ over a field $k$ we have\begin{enumerate}
	\item  $\mathrm{Br}(\overline{\@M}_{1,1}) \simeq  \mathrm{Br}(k)$  if char($k$) $=0$
	\item $\mathrm{Br}(\overline{\@M}_{1,1})[\frac{1}{p}] \simeq  \mathrm{Br}(k)$ if  char($k$) $ >3$
\end{enumerate}
\end{corollary}
On a related note Chow-Witt ring of $\overline{\@M}_{1,1}$ has been computed in \cite{DiL}. 

Then we focus our attention on stacky curves and identify all $\A^1$-connected stacky curves in Theorem \ref{4.15}. We now highlight some differences between $\A^1$-connectedness of schemes and stacks. In contrast to the case of proper curves where the only $\A^1$-connected curve is $\P^1$, there are more $\A^1$-connected stacky curves. As one would expect, all $\A^1$-connected stacky curves have $\P^1$ as their coarse moduli space. While $\A^1$-connectedness of proper schemes is a birational invariant \cite[Corollary 2.4.6]{amorel} this is not the case for stacks(\cite[Example 2.9]{hy}). 
In the last two sections we discuss homotopy purity and motivic cohomology for stacks and deduce applications to tame stacky curves. In particular, we compute the Picard group of a stacky orbifold curve (Corollary \ref{5.3}) recovering the result proved in \cite[Theorem 1.1]{lopez}. This will be helpful while considering the motivic homotopy types of the stacks as Picard groups are motivic invariants.

In so far as the unstable $\A^1$-homotopy type is concerned, for curves this coincides with their isomorphism class (as schemes). In particular $\pi_0^{\A^1}$ of a curve is sufficient to determine its unstable $\A^1$-homotopy type. In case of stacky curves it seems reasonable to expect that $\pi_0^{\A^1}$ along with $\pi_1^{\A^1}$ will completely determine unstable $\A^1$-homotopy type. In light of this the computation of $\pi_1^{\A^1}$ is an interesting topic which will be explored in a future work.\\ 

\noindent\textbf{Conventions.} All stacks in the paper are assumed to be separated and tame. All sheaves are assumed to be Nisnevich sheaves and torsors are assumed to be \'{e}tale locally trivial unless stated otherwise. $\pi_0^{\A^1}(BG)$ will always be assumed to be pointed by the trivial torsor.\\

\noindent\textbf{Outline.} The paper is structures as follows: in Section \ref{section-preliminaries}, we review some basics concepts and definitions of motivic homotopy theory (Section \ref{subsection-motivic-basics}) and algebraic stacks (Section \ref{subsection-stacks-basics}) and in Section \ref{sec3}, we prove Theorem \ref{main1} (see Theorem \ref{3.5}).

In Section \ref{section-applications}, we analyse $\mu_n$-gerbes over $\P^n$ (Theorem \ref{theorem-mun-gerbes-Pn}). As a consequence, we show that the Brauer group of $\overline{\@M}_{1,1}$ in characterisitic $0$ is trivial (Corollary \ref{corollary-brauer-M11}). We also get a classification of all $\A^1$-connected stacky curves (Theorem \ref{4.15}).

In the last two sections we discuss homotopy purity and motivic cohomology for stacks and deduce applications to tame stacky curves. In particular, we compute the Picard group of a stacky orbifold curve (Corollary \ref{5.3}) recovering the result proved in \cite[Theorem 1.1]{lopez}.\\

\noindent\textbf{Acknowledgements.} We thank Piotr Achinger, Joseph Ayoub and Marc Hoyois for their comments and suggestions. The second-named author would also like to thank Marc Hoyois for helpful discussions around Lemma \ref{3.7}. We are also grateful to Matthias Wendt  for many suggestions and corrections which improved the paper.\\
The first-named author was supported by the project KAPIBARA funded by the European Research Council (ERC) under the European Union's Horizon 2020 research and innovation programme (grant agreement No 802787). The second-named author acknowledges the support of SFB 1085 Higher Invariants funded by DFG.

\section{Preliminaries}\label{section-preliminaries}
\subsection{Preliminaries on motivic homotopy theory}\label{subsection-motivic-basics}
In this section we give a quick introduction to the definitions and concepts from motivic homotopy theory which we will need in this paper.

Let $S$ be a noetherian scheme of finite Krull dimension and $Sm_S$ be the category of smooth schemes over $S$. Let $Pre(S)$ denote the $\infty$-category of presheaves of (pointed) spaces on $Sm_S$. There are endofunctors 
$$ \Sing, \mathrm{L}_{\mathrm{Nis}} : Pre(S) \to Pre(S)$$ where $\Sing$ is the functor which converts a presheaf of (pointed )space into an $\A^1$-invariant (pointed) presheaf(\cite[{Page 87} ]{mv}and $\mathrm{L}_{\mathrm{Nis}}$ is the Nisnevich sheafification functor. The $\infty$-category of (pointed) motivic spaces, denoted $\sH(S)$ is obtained by localising $Pre(S)$, with respect to $\Sing$ and $ \mathrm{L}_{\mathrm{Nis}} $. In other words we have an adjunction$$ \mathrm{L}_{\mathrm{mot}}: Pre(S)\rightleftarrows  \sH(S) : i$$ where $i$ (the right adjoint here) is the fully faithful inclusion function and $\mathrm{L}_{\mathrm{mot}}:= colim (\mathrm{L}_{\mathrm{Nis}}\circ \Sing)^n$. 
We refer to an element of $\sH(S)$ as a (pointed) motivic space. Localising $Pre(S)$ with respect to $\mathrm{L}_{\mathrm{Nis}}$ one obtains the infinity category of Nisnevich sheaves denoted $\@H^{s}(S)$. One can also work with $\acute{e}$tale topology and use $\acute{e}$tale sheafification functor instead of $\mathrm{ L}_{\mathrm{Nis}} $ to obtain the $\infty$-categories denoted $\sH_{\acute{e}t}(S)$ and $\sH_{\acute{e}t}^s(S)$. We have the following adjunction \cite[Page 130]{mv} 
\begin{equation}\label{equation-MV-adjunction}
i_{\ast}: \sH_{{\acute{e}}t}(S) \rightleftarrows\sH(S): i^{\ast}
\end{equation}
Now we assume $S$ is a field for the rest of this section.
Given any pointed space $\sX \in Pre(S)$,  $\pi_i^{\mathrm{pre}}(\sX)$ is the functor on $Sm_S$ which sends a smooth $U$-scheme to $\pi_i(\sX(U))$(Note $\sX(U)$ is a pointed Kan simplicial set). It's Nisnevich sheafification is denoted as $\pi_i(\sX)$. One defines $\pi_i^{\A^1}(\sX) := \pi_i(\mathrm{ L}_{\mathrm{mot}}(\sX))$. Note that to define $\pi_0$ one need not assume the given space to be pointed. A sheaf of sets $\sF$ on $Sm_S$ is said to be $\A^1$-invariant if for any smooth $S$- scheme $U$, the projection map $p: \A^1_U \to U$ induces and isomorphism  $\sF(U) \to \sF(\A^1_U)$. Given a section $H \in \sF(A^1_U)$ we define $H_0$ and $H_1$ to be $i_0^{\ast} H$ and $i_1^{\ast} H$ respectively, where $i_0^{\ast}$ and $i_1^{\ast}$ are induced from the sections $i_0, i_1: U \to \A^1_U$. Any two given sections $x, y \in \sF(U)$ are said to be directly $\A^1$-homotopic if there exists $H \in \sF(A^1_U)$ such that $H_0 = x$ and $H_1 = y$. We define Nisnevich sheaf $\@S(\@F)$ to be $\pi_0(\Sing\@F(U))$. It follows from the $\Sing$ construction that for any field $L$, two sections $x,y \in \sF(L)$ are $\A^1$-homotopic if and only if their images in $\@S(\sF)(L)$ are equal. There is a notion of ghost homotopy introduced in \cite{bhs} which generalises the notion of $\A^1$-homotopy, defined as follows  
\begin{definition}\cite[Definition 2.7]{bs}
 	Let $\sF$ be a sheaf of sets. Let $U$ be a smooth scheme over $k$. The notion of an $n$-ghost homotopy is defined inductively as follows:
 	\begin{enumerate}
 		\item For $n=0$, a $0$-ghost homotopy is the same as an $\A^1$-homotopy.
 		\item For $n>0$, given $t_1,t_2\in \sF(U)$, an $n$-ghost homotopy connecting $t_0,t_1$ consists of the data
 		\[(V\rightarrow \A^1_U, W\rightarrow V\times_{\A^1_U} V, \tilde{\sigma_0}, \tilde{\sigma_1}, h, h^W ),\]
 		where 
 		\begin{itemize}
 			\item $V\rightarrow \A^1_U$ is a Nisnevich cover of $\A^1_U$.
 			\item $\tilde{\sigma_0},\tilde{\sigma_1}: U\rightarrow V$ are lifts of $\sigma_0,\sigma_1: U\rightarrow \A^1_U$, respectively.
 			\item $W\rightarrow V\times_{\A^1_U} V$ is a Nisnevich cover of $V\times_{\A^1_U} V$.
 			\item $h: V\rightarrow \sF$ where $h\circ \tilde{\sigma_i}=t_i$.
 			\item $h^W$ is an $(n-1)$-ghost chain homotopy connecting the two morphisms $p_0,p_1: W\rightarrow V\times_{\A^1_U} V\rightarrow V\rightarrow \sF$ determined by the two projections $V\times_{\A^1_U} V \rightarrow V$.
 		\end{itemize}
 		\item An $n$-ghost chain homotopy connecting $t_0,t_1$ is a finite sequence of $n$-ghost homotopies starting at $t_0$ and ending at $t_1$.
 	\end{enumerate}
 \end{definition}

Two  elements $x, y \in \sF(L)$ are $n$-ghost homotopic if and only if their images in $\@S^n(\sF)(L)$ are $\A^1$- homotopic \cite[Lemma 2.8]{bs}. There exists a universal $\A^1$-invariant sheaf, denoted $\@L(\sF) :=\lim S^n(\sF)$ \cite[Definition 2.9, Remark 2.15]{bhs} and $\pi_0^{\A^1}(\sF)(L) = \@L(\sF)(L)$. \cite[Theorem 2.2]{brs}. This immediately implies the following proposition which we will appeal to in proof of Theorem \ref{main1}.

\begin{proposition}\label{1.2}
	Two sections $x,y \in \@F(L)$ are equal in $\pi_0^{\A^1}(\sF)(L)$ if and only of they are $n$-ghost homotopic
\end{proposition}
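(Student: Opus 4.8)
The plan is to deduce the statement almost formally by chaining together the two facts recalled just above: that $\pi_0^{\A^1}(\sF)(L) = \@L(\sF)(L)$, where $\@L(\sF)$ is the filtered (sequential) colimit $\dlim_n \@S^n(\sF)$ of the tower $\sF \to \@S(\sF) \to \@S^2(\sF) \to \cdots$, together with \cite[Lemma 2.8]{bs}, which says that $x,y \in \sF(L)$ are $n$-ghost homotopic precisely when their images in $\@S^n(\sF)(L)$ are $\A^1$-homotopic. The entire argument is then a matter of tracing equality through this colimit. The first step is to rewrite equality in $\pi_0^{\A^1}(\sF)(L)$ as equality in a colimit of sets: since $L$ is a field, hence a point for the Nisnevich topology, I would argue that the section functor $\sG \mapsto \sG(L)$ commutes with the colimit defining $\@L(\sF)$, so that
\[
\pi_0^{\A^1}(\sF)(L) \;=\; \@L(\sF)(L) \;=\; \dlim_n \@S^n(\sF)(L).
\]
Consequently $x$ and $y$ become equal in $\pi_0^{\A^1}(\sF)(L)$ if and only if their images already coincide in $\@S^m(\sF)(L)$ for some finite $m$, which is exactly the defining property of equality in a sequential filtered colimit.

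It then remains to translate ``equal at stage $m$'' into ghost homotopy. Using the remark that two sections of any sheaf are $\A^1$-homotopic over $L$ exactly when they become equal after one application of $\@S$, equality of the images of $x,y$ in $\@S^m(\sF)(L)$ is the same as $\A^1$-homotopy of their images in $\@S^{m-1}(\sF)(L)$, which by \cite[Lemma 2.8]{bs} is equivalent to $x$ and $y$ being $(m-1)$-ghost homotopic (the degenerate case $m=0$, i.e.\ literal equality in $\sF(L)$, trivially forces $0$-ghost homotopy and so is subsumed). Letting $m$, equivalently $n=m-1$, range over all nonnegative integers yields both implications simultaneously and completes the proof.

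The one nonformal point, and the step I expect to require the most care, is the commutation of evaluation at $L$ with the colimit $\dlim_n \@S^n(\sF)$. A priori the presheaf-level colimit need not be a sheaf, so one cannot blindly pull $(-)(L)$ inside the colimit; the justification is that $\Spec L$ (or, for a general finitely generated $L/k$, the associated pro-smooth object) is a point of the Nisnevich site, so that sections over $L$ compute a stalk and filtered colimits of Nisnevich sheaves are computed stalkwise, unaffected by the sheafification built into $\@S$. I would isolate this as the only genuine input beyond the cited results and verify it carefully, after which the equivalence follows directly from the elementary description of equality in a filtered colimit.
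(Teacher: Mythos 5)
Your proof is correct and is essentially the paper's own argument: the paper gives no separate proof, asserting that Proposition \ref{1.2} follows immediately from \cite[Lemma 2.8]{bs} and the identification $\pi_0^{\A^1}(\sF)(L) = \@L(\sF)(L) = \dlim_n \@S^n(\sF)(L)$ from \cite{brs}, which is precisely the chain of equivalences you spell out. The one point you rightly flag --- commuting evaluation at $L$ with the sequential colimit --- is valid because $\Spec L$ is (a cofiltered limit of) essentially smooth schemes, so its sections are themselves a filtered colimit (a stalk computation) and commute with the filtered colimit defining $\@L(\sF)$.
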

The following lemma follows from the definition of ghost homotopy.
\begin{lemma}
	Let $\sF$ be a sheaf of sets and $U$, $V$ be Henselian local rings. Given $x,y: U \rightrightarrows \sF$ and $w,z: V \rightrightarrows \sF$ such that $x$ and $y$ are $n$-ghost homotopic, and so are $w$ and $z$. Then $ (x,w) : U \sqcup V \rightrightarrows \sF$ and $(y,z) : U \sqcup V \rightrightarrows \sF$ are $n$-ghost homotopic.
\end{lemma}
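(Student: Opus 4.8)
The plan is to argue by induction on $n$, establishing simultaneously the analogue for $n$-ghost \emph{chain} homotopies, since the two notions are intertwined in the recursive definition. The guiding observation is that every ingredient of an $n$-ghost homotopy is compatible with disjoint unions of base schemes: a disjoint union of Nisnevich covers is again a Nisnevich cover; for covers $P\to\A^1_U$ and $P'\to\A^1_V$ one has $\A^1_{U\sqcup V}=\A^1_U\sqcup\A^1_V$ and the cross terms of the fibre product vanish, so that $(P\sqcup P')\times_{\A^1_{U\sqcup V}}(P\sqcup P')=(P\times_{\A^1_U}P)\sqcup(P'\times_{\A^1_V}P')$; and, since $\sF$ is a sheaf, a morphism out of a disjoint union is exactly a pair of morphisms, so that $(x,w)$ and $(y,z)$ are genuine sections of $\sF$ over $U\sqcup V$. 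The recipe is therefore to form the termwise disjoint union of the two given ghost homotopies.

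For the base case $n=0$, an $\A^1$-homotopy is a section $H\in\sF(\A^1_U)$ with $H_0=x$, $H_1=y$, and likewise $K\in\sF(\A^1_V)$ with $K_0=w$, $K_1=z$. Using $\A^1_{U\sqcup V}=\A^1_U\sqcup\A^1_V$ and the sheaf property, the pair $(H,K)$ is a section over $\A^1_{U\sqcup V}$ whose restrictions along $0$ and $1$ are $(x,w)$ and $(y,z)$; this is the required $0$-ghost homotopy.

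For the inductive step I would assume the claim for $(n-1)$-ghost homotopies and $(n-1)$-ghost chain homotopies, and write the two $n$-ghost homotopies as data $(P\to\A^1_U, Q\to P\times_{\A^1_U}P, \tilde\sigma_0,\tilde\sigma_1, h, h^Q)$ and $(P'\to\A^1_V, Q'\to P'\times_{\A^1_V}P', \tilde\tau_0,\tilde\tau_1, h', h^{Q'})$. As combined data I take the covers $P\sqcup P'\to\A^1_{U\sqcup V}$ and $Q\sqcup Q'\to(P\sqcup P')\times_{\A^1_{U\sqcup V}}(P\sqcup P')$ (legitimate by the identification above), the lifts $\tilde\sigma_i\sqcup\tilde\tau_i\colon U\sqcup V\to P\sqcup P'$, and the morphism $h\sqcup h'\colon P\sqcup P'\to\sF$, which satisfies $(h\sqcup h')\circ(\tilde\sigma_i\sqcup\tilde\tau_i)=(x,w)$ resp.\ $(y,z)$. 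The final datum must be an $(n-1)$-ghost chain homotopy on $Q\sqcup Q'$ connecting the two projection morphisms; since the projections of $Q\sqcup Q'$ are the disjoint unions of those of $Q$ and of $Q'$, the induction hypothesis applied to $h^Q$ and $h^{Q'}$ over the bases $Q$, $Q'$ supplies exactly such a chain homotopy $h^Q\sqcup h^{Q'}$. This assembles the desired $n$-ghost homotopy. The chain version at level $n$ then follows from the homotopy version: given chains over $U$ and over $V$, I would pad the shorter with constant ghost homotopies (a section $t$ is connected to itself by the constant homotopy at every level) to equalise the lengths and combine termwise.

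The only point requiring care, and where I expect the main bookkeeping burden, is the recursion: in the inductive step the chain homotopies $h^Q$, $h^{Q'}$ live over the second-level covers $Q$, $Q'$, which are arbitrary schemes rather than Henselian local rings. Thus the induction genuinely uses the statement over general base schemes, and the Henselian hypothesis plays no role in the proof—it is merely the form in which the result is invoked in Lemmas \ref{lifttox} and \ref{lifttoz}. Once this is granted, each remaining verification is the formal matching of the evaluation points $\sigma_0,\sigma_1$ and the distributivity of covers and fibre products over $\sqcup$ recorded in the first paragraph.
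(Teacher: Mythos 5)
Your proof is correct. The paper gives no argument here at all --- it simply asserts that the lemma ``follows from the definition of ghost homotopy'' --- and your induction on $n$, combining the two data termwise over disjoint unions (together with the correct observation that the induction must be run over arbitrary base schemes, the Henselian hypothesis playing no role), is exactly the verification that remark gestures at.
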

\begin{lemma}\label{lift1}
	Let $W$ be a Nisnevich cover of $U:= \A^1_L \setminus 0$. Suppose we have $\alpha: W \to X$ and $\beta: (\A^1)^h_0 \to X$ such that $\alpha\vert_F $ and $\beta\vert_F$ are $n$-ghost homotopic, where $F$ is the function field of $(\A^1)^h_0$ . Then images of $ \alpha\vert_1$ and $\beta\vert_0$ are equal in $\pi_0^{\A^1}(X)(L)$.
\end{lemma}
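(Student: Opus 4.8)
The plan is to exhibit $\alpha|_1$ and $\beta|_0$ as the two endpoints of a single $(n+1)$-ghost homotopy over $\Spec L$, obtained by gluing $\alpha$ and $\beta$ along the Nisnevich distinguished square relating $U$ and $(\A^1)^h_0$, and then to invoke Proposition \ref{1.2}. First I would record that $V := W \sqcup (\A^1)^h_0 \to \A^1_L$ is a Nisnevich cover: the distinguished square shows that $U \sqcup (\A^1)^h_0 \to \A^1_L$ is a Nisnevich cover (an isomorphism away from $0$, and split at $0$ by the closed point of $(\A^1)^h_0$), and precomposing with the Nisnevich cover $W \to U$ preserves this property. I then choose a lift $\tilde{\sigma}_1 : \Spec L \to W$ of $1 \in U$ (possible since $W \to U$ is Nisnevich and $1$ has residue field $L$) and let $\tilde{\sigma}_0 : \Spec L \to (\A^1)^h_0$ be the closed point over $0$. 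Setting $h := \alpha \sqcup \beta : V \to X$, one has $h \circ \tilde{\sigma}_1 = \alpha|_1$ and $h \circ \tilde{\sigma}_0 = \beta|_0$, which is precisely the data $(V, \tilde{\sigma}_0, \tilde{\sigma}_1, h)$ of a ghost homotopy; what remains is to supply the lower $n$-ghost chain homotopy $h^{W''}$ on a cover of $V \times_{\A^1_L} V$ relating the two projections $p_0, p_1$.

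Next I would decompose $V \times_{\A^1_L} V$ into its four pieces and treat them separately, reassembling via the disjoint-union lemma preceding this statement. On $(\A^1)^h_0 \times_{\A^1_L} (\A^1)^h_0$ the two projections coincide, since the henselization is a monomorphism on spectra, so there is nothing to prove there. The cross terms satisfy $W \times_{\A^1_L} (\A^1)^h_0 \cong W \times_U \Spec F =: W_F$, a Nisnevich cover of $\Spec F$; on $W_F$ the projections $p_0, p_1$ pull back $\alpha|_F$ and $\beta|_F$, which are $n$-ghost homotopic by hypothesis, so pulling this homotopy back along $W_F \to \Spec F$ supplies the required $n$-ghost homotopy on the cross terms.

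The remaining, and main, piece is the diagonal component $W \times_U W$, where $p_0, p_1$ become the two pullbacks $\alpha \circ p_0$ and $\alpha \circ p_1$. Since $W \to U$ is \'etale, the diagonal $W \hookrightarrow W \times_U W$ is clopen, and on it the two maps agree; I therefore only need to control the complement. Here I use that in the application $\alpha$ is a Nisnevich-local lift to $X$ of a single section of $\sX$ over $U$, so that $\alpha \circ p_0$ and $\alpha \circ p_1$ are two lifts to $X$ of one and the same map to the quotient $[X/G]$. By Lemma \ref{lift} these differ by a section $g$ of $G$, and the argument of Lemma \ref{3.3} (composing the ghost homotopy witnessing $g = e$ in $\pi_0^{\A^1}(G)$ with the action map $G \times X \to X$) yields the needed $n$-ghost homotopy between them, passing to $\pi_0^{\A^1}(X)/\pi_0^{\A^1}(G)$ if $g$ is not ghost-trivial.

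Assembling the three contributions via the disjoint-union lemma produces the $n$-ghost chain homotopy $h^{W''}$, completing the $(n+1)$-ghost homotopy between $\alpha|_1$ and $\beta|_0$; Proposition \ref{1.2} then gives the asserted equality in $\pi_0^{\A^1}(X)(L)$. I expect the delicate point to be exactly the complement of the diagonal in $W \times_U W$: one must check that the ambiguity there is genuinely governed by the $G$-action as in Lemma \ref{3.3}, and produces no new obstruction beyond the ones already accounted for over $F$.
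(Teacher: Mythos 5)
Your construction coincides with the paper's: the Nisnevich cover $W'=W\sqcup(\A^1)^h_0$ of $\A^1_L$, the lifts of $0$ and $1$, the map $h=\alpha\sqcup\beta$, the decomposition of $W'\times_{\A^1_L}W'$ into a diagonal piece, a henselian piece and cross terms, and the assembly via the disjoint-union lemma and Proposition \ref{1.2}. However, two of your three pieces are not actually disposed of. The clear-cut error is the claim that the two projections coincide on $(\A^1)^h_0\times_{\A^1_L}(\A^1)^h_0$ because ``henselization is a monomorphism on spectra'': henselization is \emph{not} a monomorphism. Writing $A=\sO_{\A^1_L,0}$, inverting $t$ in $A^h\otimes_A A^h$ gives $F\otimes_{L(t)}F$, which has zero divisors because $F/L(t)$ is a nontrivial separable algebraic extension; so the self-product is strictly larger than the diagonal. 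What is true (since \'etale neighbourhoods of $0$ having a single point over $0$ are cofinal) is that the off-diagonal locus maps to $\Spec F$ under \emph{both} projections; there the two composites with $h$ are $\beta|_F\circ q_1$ and $\beta|_F\circ q_2$ for two genuinely different maps $q_1,q_2$ to $\Spec F$, and connecting them requires using the hypothesis twice, chaining through $\alpha|_F$: an $n$-ghost \emph{chain} homotopy $\beta|_F\circ q_1\simeq\alpha|_F\circ q_1=\alpha|_F\circ q_2\simeq\beta|_F\circ q_2$. This is in effect how the paper treats all off-diagonal pieces uniformly, and note that the middle equality itself already needs $\alpha$ to be compatible on overlaps, which is the second problem.

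On $W\times_U W$ (and in fact already in your cross terms, where $h\circ p_0=\alpha\circ pr_W$ is \emph{not} the pullback of $\alpha|_F$ along $pr_F$ unless $\alpha$ is compatible on overlaps) you resolve the ambiguity by invoking the $G$-action, Lemma \ref{lift} and Lemma \ref{3.3}. But the statement being proved has no group, no stack, and no hypothesis that $\alpha$ is a local lift of a section of $[X/G]$; moreover, as your own hedge concedes, that route at best yields equality in $\pi_0^{\A^1}(X)/\pi_0^{\A^1}(G)$, whereas the lemma asserts equality in $\pi_0^{\A^1}(X)(L)$. So you end up proving a different, application-flavoured statement rather than this one. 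The paper instead disposes of this piece by stipulating (this is the meaning of ``composing $pr_i^W$ with $W\to U\to X$ gives the same section by the sheaf condition, after refining'') that the two pullbacks of $\alpha$ to $W\times_U W$ agree, i.e.\ that the local lift is compatible on overlaps and hence effectively descends to $U$; with that reading all three pieces close up inside $\pi_0^{\A^1}(X)$ itself. Some such compatibility hypothesis is genuinely needed: if $X$ is a disjoint union of two points, $W=U\sqcup U$, $\alpha$ a different constant on each copy and $\beta$ the first constant, then your hypothesis holds (lift $\Spec F$ into the first copy) while $\alpha|_1$ computed in the second copy differs from $\beta|_0$ in $\pi_0^{\A^1}(X)(L)$. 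So the gap to close is: either add the overlap-compatibility assumption and keep the stated conclusion, or keep your $G$-equivariant hypotheses and weaken the conclusion to the quotient sheaf; the proposal as written does neither.
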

\begin{proof}
	We claim $ \alpha\vert_1$ and $\beta\vert_0$ are $n+1$ ghost homotopic. $ W':= W \sqcup (\A^1)^h_0$ is Nisnevich cover with maps to $X$ from this cover supplied by $\alpha$ and $\beta$ respectively. Write $W = \sqcup_I W_i$ where $W_i$ is an irreducible component. Then $ V=W' \times_{\A^1_L} W' \simeq \sqcup_I \Spec F \sqcup (W \times W) \sqcup (X^h_0 \times X^h_0)$. $pr_1 = (pr_1^F,pr_1^{W}, pr_1^{X^h_0}): V \to W $ is the morphism where $pr_1^F: \sqcup_I\Spec F \to W $, $pr_1^{W}: W\times W \to W$ is the first projection and $ pr_1^{X^h_0}: X^h_0 \times X^h_0 \to X^h_0$ is also the first projection. $pr_2 = (pr_2^F,pr_2^{W}, pr_2^{X^h_0})$ is given by second projections of $W$ and $X^h_0$ and $pr_2^F : \sqcup_I\Spec F \to X^h_0$. By refining $W \times W$, if necessary, composing $pr_i^W$,$ (i=1,2)$ with $W \to U \to X$ give the same section by sheaf condition and similarly for $pr_i^{X^h_0}$. By previous lemma $pr_1$ and $pr_2$ are $n$-ghost homotopic.
\end{proof}
We recall the following lemma which gives rise to an $\A^1$-fiber sequence from \cite{Morel-book} and is an important computational tool for $\pi_i^{\A^1}$
\begin{lemma}\cite[Lemma 6.5, page 208]{Morel-book}
	\begin{enumerate}
		\item A $G$-torsor (in Nisnevich topology) $\sY \to \sX$ with $G$ strongly $\A^1$-invariant sheaf is an $\A^1$-covering.
		\item A $G$-torsor in $\acute{e}$tale  topology with $G$ a finite $\acute{e}$tale $k$-group with order coprime to char(k) is an $\A^1$-covering.
	\end{enumerate}

\end{lemma}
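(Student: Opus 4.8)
The plan is to verify the defining property of an $\A^1$-covering directly, namely the unique right lifting property with respect to $\A^1$-trivial cofibrations (monomorphisms that are $\A^1$-weak equivalences), by translating lifts into sections of a pulled-back torsor and controlling these sections through the functors $H^0(-,G) = G(-)$ and $H^1(-,G)$.

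For (1), I would first invoke Morel's structural result that a strongly $\A^1$-invariant sheaf of groups $G$ is itself $\A^1$-invariant and that its Nisnevich classifying space $B_{\mathrm{Nis}}G$ is already $\A^1$-local, so that $G(-) = [-,G]_{\A^1}$ and $H^1_{\mathrm{Nis}}(-,G) = [-, B_{\mathrm{Nis}}G]_{\A^1}$ as functors on $\sH(k)$. Now fix an $\A^1$-trivial cofibration $i: A \inj B$ together with a lifting square having $A \to \sY$ sitting over $B \to \sX$. Pulling the torsor back along $B \to \sX$ produces a $G$-torsor $P \to B$ whose sections are exactly the lifts $B \to \sY$, and the square supplies a section of $P$ over $A$. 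Since $i$ is an $\A^1$-weak equivalence and $B_{\mathrm{Nis}}G$ is $\A^1$-local, the restriction $H^1_{\mathrm{Nis}}(B,G) \to H^1_{\mathrm{Nis}}(A,G)$ is a bijection; as $[P]$ restricts to $0$ over $A$, this forces $[P]=0$, so $P \to B$ admits a global section. Using the bijection $G(B) \to G(A)$ (again from $\A^1$-locality of $G$ and $i$ being an $\A^1$-w.e.), I can correct this section so that it restricts to the prescribed one over $A$, giving existence. Uniqueness is the same bijection read injectively: two lifts agreeing on $A$ differ by a unique $g \in G(B)$ with $g|_A = e$, and injectivity of $G(B) \to G(A)$ forces $g = e$.

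For (2), the strategy is identical once the cohomology of the structure group is brought under $\A^1$-homotopical control, but now in the \'{e}tale topology. The input I would isolate (and cite) is that for a finite \'{e}tale $k$-group $G$ of order prime to $\Char k$, the sheaf $G$ is $\A^1$-invariant and its \'{e}tale classifying space $B_{\acute{e}t}G$ is $\A^1$-local; this rests on homotopy invariance (rigidity) of \'{e}tale cohomology with finite prime-to-characteristic coefficients, together with the comparison adjunction \eqref{equation-MV-adjunction} relating $\sH_{\acute{e}t}(k)$ and $\sH(k)$. Granting this, an \'{e}tale $G$-torsor $\sY \to \sX$ is classified by a map to $B_{\acute{e}t}G$, and the verbatim argument of (1)—existence via bijectivity of $H^1_{\acute{e}t}(B,G) \to H^1_{\acute{e}t}(A,G)$ followed by a correction through $G(B) \surj G(A)$, and uniqueness via injectivity of $G(B) \inj G(A)$—yields the unique lifting property.

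I expect the genuine content to lie not in the lifting bookkeeping, which is formal once the relevant sheaves are $\A^1$-local, but in the two structural inputs: for (1), Morel's theorem that strong $\A^1$-invariance of $G$ upgrades to $\A^1$-locality of $B_{\mathrm{Nis}}G$, and for (2), the $\A^1$-invariance of \'{e}tale cohomology with finite coefficients of order prime to $\Char k$, where the prime-to-characteristic hypothesis is essential and the passage between the \'{e}tale and Nisnevich $\A^1$-homotopy categories must be handled with care. These are exactly the points I would extract as lemmas and attribute to Morel's book and the standard rigidity results.
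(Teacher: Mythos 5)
The paper gives no proof of this lemma at all---it is quoted directly from Morel's book---and your proposal reconstructs exactly Morel's argument: lifts against $\A^1$-trivial cofibrations are identified with sections of the pulled-back torsor and controlled through $G(-)$ and $H^1(-,G)$, which transform $\A^1$-weak equivalences into bijections because $G$ and $B_{\mathrm{Nis}}G$ are $\A^1$-local when $G$ is strongly $\A^1$-invariant, respectively because $B_{\acute{e}t}G$ is $\A^1$-local by homotopy invariance of \'{e}tale cohomology with finite coefficients of order prime to the characteristic, transported through the adjunction between the \'{e}tale and Nisnevich motivic homotopy categories. Your write-up is correct and follows essentially the same route as the cited source, including the accurate isolation of the two genuinely nontrivial inputs.
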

The following definition is standard.
\begin{definition}
	A morphism $\phi: \sF \to \@G $ of pointed sheaves of sets is said to be an injection of pointed sheaves if for any Henselian local ring $O$ and $x, y \in \sF(O)$ such that both $\phi(x)$ and $\phi(y)$ map to the base point of $\@G(O)$ implies $x = y$. In a similar fashion one can define exactness of pointed sheaves of sets.
\end{definition}
\begin{remark} An $\A^1$-covering of pointed space $\sY \to \sX$, with fiber $F$ is a special case of $\A^1$-fibration and gives rise a long exact sequence $$ \cdots \to \pi_i^{\A^1}(\sY,y) \to \pi_i^{\A^1}(\sX,x) \to \pi_{i-1}^{\A^1}(\sF,f) \cdots \pi_0^{\A^1}(F) \to \pi_0^{\A^1}(\sY) \to \pi_0^{\A^1}(\sX)$$ where for $\pi_0^{\A^1}$ exactness means at the level of pointed sheaves of sets.
\end{remark}

\subsection{Preliminaries on Stacks}\label{subsection-stacks-basics}


By an algebraic stack $\sX$ over $S$ we mean an \'{e}tale sheaf of groupoids on $Sch_S$ which admits a smooth representable surjection $X\rightarrow\sX$ from a scheme $X$ (see \cite{alper}). We will always assume that $\sX$ is quasi-separated, i.e, the diagonal of $\sX$ is representable, separated, and quasi-compact. Also, we will mostly deal with Deligne-Mumford stacks (algebraic stacks whose diagonal is unramified). This implies that there exists an \'{e}tale representable surjection $X\rightarrow \sX$ from a scheme $X$.

An algebraic stack $\sX\rightarrow \Spec k$ is said to be \textit{tame} if for any algebraically closed field $L$ and any object $\xi: \Spec L\rightarrow \sX$, the automorphism group scheme $\underline{Aut}_L (\xi)\rightarrow \Spec L$ is linearly reductive (see \cite[Theorem 3.2]{aov}). If $\sX$ has finite inertia (for example, if $\sX$ is Deligne-Mumford), then this is equivalent to the condition that $char(L)$ does not divide the order of $\underline{Aut}_L (\xi)$.

A stack $\sX$ over $S$ is said to be an \textit{fppf gerbe} if 
\begin{enumerate}
	\item for any scheme $T\rightarrow S$, there exists an fppf cover $T'\rightarrow T$ such that $\sX(T')$ is non-empty, and 
	\item given any $x,y\in \sX(T)$, there exists an fppf cover $T''\rightarrow T$ such that $x\simeq y$ in $\sX(T'')$.
\end{enumerate}

For an algebraic stack $\sX$, and let $s\in |\sX|$ be a point in the topological space of $\sX$ \cite[04XE]{stacksproject}. The residual gerbe at $x$ (if it exists) is a reduced, locally Noetherian algebraic stack $\mathcal{G}_x$ together with a monomorphism $\mathcal{G}_x\rightarrow \sX$ such that $|\mathcal{G}_x|$ is a singleton set. A residual gerbe is an fppf gerbe (see \cite[Appendix B]{rydh-devissage}).

Any stack defines a simplicial sheaf on $Sm_S$ by the nerve construction for a groupoid, any stack may be thought of as a simplicial sheaf on $Sch$. In this way, $\sX$ defines an object of $\sH_{\acute{e}t}(S)$. Using the adjunction \ref{equation-MV-adjunction} $i_*(\sX)$ defines an object of $\sH(S)$ (see \cite{hollander}, \cite{motivesdm} for details).

We say that a stack $\sX$ is a quotient stack if $\sX$ is the stack quotient $[X/G]$ of a scheme $X$ by the smooth action of a linear algebraic group.
For any scheme $V$, and morphism $V \to \sX$ is equivalent to a $G$-bundle $P\rightarrow V$ with a $G$-equivariant map $P\rightarrow X$. We have the following obvious lemma which follows immediately from the definitions
\begin{lemma}\label{lift}
	A morphism $\alpha: V \to [X/G]$ (given by a $G$-bundle $P\rightarrow V$ with a $G$-equivariant map $P\rightarrow X$) lifts (not necessarily uniquely) to $X$ if and only if $P$ is a trivial $G$-bundle. 
\end{lemma}

\begin{definition}
	A stacky curve over a field $k$ is a smooth, proper, geometrically connected  Deligne Mumford stack of dimension 1. 
\end{definition}
\begin{remark}
	A stacky curve with a trivial generic stabiliser is uniquely determined by the data of its coarse space and ramification divisor on the coarse space, (see \cite{gers}).
\end{remark}
Now we look at some examples which will be relevant for us.
\begin{example}
	Let $k$ be a field with intergers $n$ and $m$ invertible in $k$. Consider the action of $\mathbb{G}_m$ on $\A^2\setminus\lbrace 0\rbrace$ given by $(x, y)\mapsto (t^nx , t^m y)$ for positive integers $n$,$m$. The quotient stack denoted, $\P(n,m)$ is a Deligne-Mumford stack with $\P^1$ as its coarse moduli space. $\P(1,m)$ has non-trivial stabilisers at a single point. More precisely, the orbit $(0,y)$ in $\A^2\setminus \lbrace 0\rbrace $ has stabiliser $\Z/n\Z$, and every other orbit has trivial stabiliser. In case $n$ and $m$ are corprime and $n,m>1$ the resulting quotient stack has exactly two points with non trivial stablisers- $\Z/m\Z$ and $\Z/n\Z$.
\end{example}
\begin{example}\label{2.9}
	Let $X$ be a scheme (over a field where $n$ is invertible) and $L$ be a line bundle. Let $ B\mathbb{G}_m \to B\mathbb{G}_m$ be induced from the map $\mathbb{G}_m \to \mathbb{G}_m$ which sends $x$ to $x^n$, for some integer $n >1$. We define the root gerbe $X(\sqrt[n]{L})$ to be the following fibered product  $$\xymatrix{
		X(\sqrt[n]{L})\ar[r] \ar[d] & B\mathbb{G}_m \ar[d] \\
		X\ar[r] & B\mathbb{G}_m
	}$$ where $X \to B\mathbb{G}_m$ is induced by the line bundle $L$.
\end{example}
\begin{example}\label{1.7}
	Consider the Kummer exact sequence of $\acute{e}$tale sheaves for $n>1$
	$$ 0 \to \mu_n \to \mathbb{G}_m \to \mathbb{G}_m \to 0$$
	For any scheme $X$ this induces a long exact sequence of $\acute{e}$tale cohomology groups. Using the standard result (over an algebraically closed field) that $H^2_{\acute{e}t}(\P^1,\mathbb{G}_m) =0$, one obtains $H^2_{\acute{e}t}(\P^1,\mu_n) = \Z/n\Z$. Furthermore each element of this group, called $\mu_n$-gerbe over $\P^1$ corresponds to the root gerbe $\P^1(\sqrt[n]{\sO(k)})$, for $0 \leq k < n$. These $\mu_n$-gerbes admit a quotient stack description. The trivial gerbe is simply $\P^1 \times B\mu_n$. For a non trivial gerbe one has $\P^1(\sqrt[n]{\sO(k)}) \simeq Y/\mathbb{G}_m$, $Y$ is the $\mathbb{G}_m$ torsor corresponding to the vector bundle $\sO(k)$. In particular $\P^1(\sqrt[n]{\sO(1)}) \simeq \P^1(n,n)$.
\end{example}

\section{Computation}\label{sec3}
\begin{notation}
	Torsor will always mean an $\acute{e}$tale local torsor unless specified otherwise, throughout the paper. For a scheme $X$ and a point $p \in X$, $X^h_p$ will denote the henselisation of the local ring at $p$. Given a group $G$ with an action on scheme $X$, $[X/G]$ will denote the resulting quotient stack.  
\end{notation}
For rest of the paper we will deal with $G$ an affine algebraic group over the base field $k$ satisfying
\begin{enumerate}
	\item \label{cond1} $H^1_{\acute{e}t}(L, G) \to H^1_{\acute{e}t}(\A^1_L, G)$ is an isomorphism for all field $L/k$.
	\item \label{condti2} For any regular local ring $R$ over $k$, with function field $F$, $H^1_{\acute{e}t}(R, G) \to H^1_{\acute{e}t}(F, G)$ is an injection.
\end{enumerate}
In other words $G$-torsors are $\A^1$-invariant over fields and $G$ satisfies the statement of Grothendieck--Serre conjecture.

Suppose $G$ acts on a smooth scheme $X/k$. 
Denote by $\sX$ the quotient stack $[X/G].$ 
 Then we have the following commutative diagram $$\xymatrix{
	G \times X\ar@<-.5ex>[r] \ar@<.5ex>[r] & X \ar[r] & \sX
}$$ Applying $\pi_0^{\A^1}$ to the above diagram we have an induced action of Nisnevich sheaves $$\pi_0^{\A^1}(G) \times \pi_0^{\A^1}(X) \rightrightarrows \pi_0^{\A^1}(X) $$ and we denote by $ \pi_{0}^{\A^1}(X)/\pi_{0}^{\A^1}(G) $ the cokernel in the category of Nisnevich sheaves of sets. Note since $G$ is a sheaf of groups $\pi_0^{\A^1}(G)$ is also a sheaf of groups. 

\begin{theorem}\label{3.5}
	In case $BG$ is $\A^1$-connected the induced map $ \pi_{0}^{\A^1}(X)/\pi_{0}^{\A^1}(G)  \to \pi_{0}^{\A^1}(\sX)$ is an isomorphism of sheaves.
\end{theorem}
Before we prove the theorem we need a few technical lemmas. The idea of the proof is to first prove the injection on the field valued points. This holds more generally without $BG$ being $\A^1$-connected. To that end we show that  an equality of elements $x, y \in \pi_{0}^{\A^1}(\sX)(L)$ gives rise to a ghost homotopy on the scheme $X$ in case one of the points lift to $X(L)$. The proof boils down to lifting the ghost homotopy between $x$ and $y$ in the stack $\sX$ to a ghost homotopy on $X$.

\begin{lemma}\label{3.3}
	Let $x_1 \neq x_2 \in X(L)$ be two lifts of a given map $x: \Spec L \to \pi_0(\sX)$ Then $x_1 = x_2 \in \pi_0^{\A^1}(X)(L)/\pi_0^{\A^1}(G)(L)$.
\end{lemma}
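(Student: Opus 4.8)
The plan is to reduce the statement to an elementary orbit computation on the scheme $X$ and then transport it through the functor $\pi_0^{\A^1}$. First I would identify the $L$-points of $\pi_0(\sX)$. Since every Nisnevich cover of $\Spec L$ splits, Nisnevich sheafification does not alter sections over a field, so $\pi_0(\sX)(L)$ is simply the set of isomorphism classes of objects of the groupoid $\sX(L)$. Thus the hypothesis that $x_1, x_2 \in X(L)$ are two lifts of the same point $x \in \pi_0(\sX)(L)$ means precisely that the objects $[x_1], [x_2] \in \sX(L)$ obtained by composing with $X \to \sX$ are isomorphic.

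Next I would make this isomorphism explicit using the quotient-stack description. A map $\Spec L \to \sX = [X/G]$ is a $G$-torsor $P \to \Spec L$ together with a $G$-equivariant map $P \to X$; by Lemma \ref{lift} the fact that each $x_i$ lifts to $X(L)$ says that the torsor underlying $[x_i]$ is the trivial torsor $G_L$, with equivariant map $g \mapsto g\cdot x_i$. An isomorphism $(G_L,\, g\mapsto g x_1) \xrightarrow{\sim} (G_L,\, g\mapsto g x_2)$ in $\sX(L)$ is a torsor automorphism of $G_L$, i.e. right translation by some $g_0 \in G(L)$, compatible with the two equivariant maps; chasing this compatibility yields $x_1 = g_0 \cdot x_2$. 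Hence $x_1$ and $x_2$ lie in the same $G(L)$-orbit of $X(L)$.

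Finally I would push this relation through $\pi_0^{\A^1}$. The images of $x_1, x_2$ in $\pi_0^{\A^1}(X)(L)$ and of $g_0$ in $\pi_0^{\A^1}(G)(L)$ are compatible with the induced action $\pi_0^{\A^1}(G) \times \pi_0^{\A^1}(X) \to \pi_0^{\A^1}(X)$ constructed before Theorem \ref{3.5}, by naturality of the unit map from a scheme to its $\pi_0^{\A^1}$ applied to the action morphism $G \times X \to X$. Therefore the image of $x_1$ equals the image of $g_0$ acting on the image of $x_2$, so the two images coincide in the cokernel $\pi_0^{\A^1}(X)(L)/\pi_0^{\A^1}(G)(L)$, which is exactly the claim.

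The torsor bookkeeping in the middle step is routine; the points demanding care are the identification $\pi_0(\sX)(L) = \{\text{isomorphism classes in } \sX(L)\}$, which rests on the triviality of Nisnevich sheafification over a field, and the final observation that equal $G(L)$-orbits map to equal sections of the sheaf-theoretic cokernel. A priori the $L$-sections of the cokernel sheaf need not coincide with the naive orbit set, but only the natural surjection from orbits to cokernel sections is needed here, so this causes no difficulty.
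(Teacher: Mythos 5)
Your proof is correct, and its skeleton matches the paper's: use Lemma \ref{lift} to realise both lifts as sections of a trivial $G$-torsor, deduce $x_2 = g\cdot x_1$ for some $g\in G(L)$, then push this relation through $\pi_0^{\A^1}$. The one genuine difference is the last step. The paper splits into two cases according to whether the image of $g$ in $\pi_0^{\A^1}(G)(L)$ equals the identity: if not, it invokes the induced action exactly as you do; if so, it proves the stronger statement $x_1 = x_2$ in $\pi_0^{\A^1}(X)(L)$ by transporting an $n$-ghost homotopy between $g$ and $e$ along the orbit map $G\to X$, $k\mapsto k\cdot x_1$ (via Proposition \ref{1.2}). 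You handle both cases at once by observing that the unit maps $X\to\pi_0^{\A^1}(X)$ and $G\to\pi_0^{\A^1}(G)$ are equivariant for the induced action, so $[g\cdot x_1]=[g]\cdot[x_1]$ unconditionally; the case distinction and the ghost-homotopy transport then evaporate. This simplification is legitimate: the equivariance you invoke is precisely the compatibility built into the paper's construction of the quotient sheaf before Theorem \ref{3.5} (resting on the fact that $\mathrm{L}_{\mathrm{mot}}$, hence $\pi_0^{\A^1}$, preserves the product $G\times X$), and the paper's Case 1 already uses it implicitly. What the paper's ghost-homotopy argument buys is a self-contained, by-hand verification of that compatibility in the only case where it has content, together with the stronger conclusion that the classes already agree in $\pi_0^{\A^1}(X)(L)$ before quotienting; what yours buys is brevity and uniformity. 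Your preliminary identification of $\pi_0(\sX)(L)$ with isomorphism classes in the groupoid $\sX(L)$, justified by splitness of Nisnevich covers of fields, is also a point the paper glosses over (it silently treats two lifts of the same point of $\pi_0(\sX)$ as sections of one and the same torsor), so making it explicit is a genuine improvement in rigour rather than a detour.
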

\begin{proof}
	If $x_1 = x_2 \in \pi_0^{\A^1}(X)(L)$ then we are done so assume to the contrary for the rest of this proof. Since $x$ lifts to $X(L)$, by Lemma \ref{lift} $x$ is given by a trivial $G$-torsor with an $G$- equivariant map $\phi: G \times L \to X $. Therefore any lift to $X(L)$ given by a section $L \to G \times L$ composed with $\phi$. Therefore there exists $g \in G(L)$ such that $g.x_1 = x_2$. We have two cases now.\\ 
	\underline{Case 1}: $g \neq e \in \pi_0^{\A^1}G(L)$, (where $e$ is the identity element of $G(L)$). In that case, by abuse of notation denote again by $g$ the image of $g$ in $\pi_0^{\A^1}(G)(L)$ and we $[g].x_1 = x_2 $. Therefore  $x_1 = x_2 \in \pi_0^{\A^1}(X)(L)/\pi_0^{\A^1}(G)(L)$.\\ \underline{Case 2}: $g = e \in \pi_0^{\A^1}(G)(L)$. In this case we claim $x_1 = x_2 \in \pi_0^{\A^1}(X)(L)$. To see this, $g = e \in \pi_0^{\A^1}(G)(L)$ implies $g$ and $e$ are $n$-ghost homotopic for some $n \geq 0$. Composing the data of this $n$-ghost homotopy with the morphism $ G \xrightarrow{f} G \times X \to X$ (such that $f(k) = (k,x_1)$ and $G \times X \to X$ is the action map) gives a $n$-ghost homotopy between $x_1$ and $g.x_1 =x_2$.
\end{proof}



\begin{lemma}\label{lifttox}
	Given any finitely generated field extension $L/k$ and $x\in X(L)$ and $h \in \pi_0^{\mathrm{pre}}( (\mathrm{L}_{\mathrm{Nis}}\Sing)^n\sX(\A^1_L )) $ such that $h_0$ lifts to $x$. Then $h_1$ lifts to $ y \in X(L)$ and $x=y \in \pi_0^{\A^1}(X)(L)/\pi_0^{\A^1}(G)(L)$
\end{lemma}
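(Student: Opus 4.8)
The plan is to induct on $n$, the number of times $\mathrm{L}_{\mathrm{Nis}}\Sing$ has been applied, using the two hypotheses on $G$ as the only external inputs: Condition \ref{cond1} to trivialise torsors in the $\A^1$-direction, and Condition \ref{condti2} (Grothendieck--Serre) to pass from \'{e}tale-local to Nisnevich-local triviality. The guiding principle throughout is that $X\to\sX$ is a $G$-torsor, so pulling it back along any representative of $h$ produces an obstruction torsor whose triviality governs liftability; the residual $G$-indeterminacy in any lift is exactly what dies on passing to $\pi_0^{\A^1}(X)/\pi_0^{\A^1}(G)$, which is where Lemma \ref{3.3} is invoked repeatedly.

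For the base case $n=0$, a class $h\in\pi_0^{\mathrm{pre}}(\sX(\A^1_L))$ is represented by a map $\A^1_L\to\sX$, that is, by a $G$-torsor $P\to\A^1_L$ together with a $G$-equivariant map $P\to X$. Since the projection $p\colon\A^1_L\to\Spec L$ admits the sections $i_0,i_1$, Condition \ref{cond1} forces $i_0^{*}=i_1^{*}=(p^{*})^{-1}$ on $H^1_{\acute{e}t}(-,G)$. As $h_0$ lifts to $x$, Lemma \ref{lift} says $i_0^{*}P$ is trivial, whence $P=p^{*}i_0^{*}P$ is trivial and so is $i_1^{*}P$. Choosing a trivialisation of $P$ converts the equivariant map into an honest $\A^1$-homotopy $\gamma\colon\A^1_L\to X$ whose endpoints $\gamma(0),\gamma(1)$ lift $h_0,h_1$. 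Thus $h_1$ lifts to $y:=\gamma(1)\in X(L)$, and $x=\gamma(0)=\gamma(1)=y$ in $\pi_0^{\A^1}(X)(L)/\pi_0^{\A^1}(G)(L)$: the first equality by Lemma \ref{3.3} and the middle one because $\gamma$ is an $\A^1$-homotopy.

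For the inductive step I would peel off the outermost $\mathrm{L}_{\mathrm{Nis}}\Sing$. Writing $\sX_{n-1}:=(\mathrm{L}_{\mathrm{Nis}}\Sing)^{n-1}\sX$, a class in $\pi_0^{\mathrm{pre}}(\mathrm{L}_{\mathrm{Nis}}\Sing\,\sX_{n-1}(\A^1_L))$ is represented Nisnevich-locally by a cover $V\to\A^1_L$, a map $g\colon V\to\sX_{n-1}$ with $g\tilde\sigma_0=h_0$ and $g\tilde\sigma_1=h_1$ for lifts $\tilde\sigma_0,\tilde\sigma_1$ of $0,1$, together with an $(n-1)$-level homotopy on a cover $W\to V\times_{\A^1_L}V$ identifying the two restrictions of $g$. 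Because $\mathrm{L}_{\mathrm{Nis}}\Sing$ preserves finite limits, $X_{n-1}:=(\mathrm{L}_{\mathrm{Nis}}\Sing)^{n-1}X\to\sX_{n-1}$ remains a torsor under $(\mathrm{L}_{\mathrm{Nis}}\Sing)^{n-1}G$, so the obstruction to lifting $g$ is such a torsor $Q$ on $V$, trivial at $\tilde\sigma_0$ since $h_0$ lifts to $x$; its analysis reduces, via the hypotheses on $G$, to that of ordinary $G$-torsors. I would then lift $g$ to $X_{n-1}$ Nisnevich-locally: Condition \ref{condti2} makes the \'{e}tale-local triviality of the underlying $G$-torsor Nisnevich-local once it is generically trivial, while Condition \ref{cond1} propagates triviality along the $\A^1$-direction of $V\to\A^1_L$ from $\tilde\sigma_0$; the inductive hypothesis applied to the $(n-1)$-homotopy on $W$ makes these local lifts agree up to the $G$-action. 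Assembling the local lifts yields an $n$-level ghost homotopy in $X$ connecting $x$ to a lift $y$ of $h_1$, all discrepancies lying in $\pi_0^{\A^1}(G)$; hence $h_1$ lifts to $y\in X(L)$ and $x=y$ in $\pi_0^{\A^1}(X)(L)/\pi_0^{\A^1}(G)(L)$ by Lemma \ref{3.3}.

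The main obstacle is precisely this assembly in the inductive step. Neither condition alone lets triviality of the obstruction torsor spread from the single point $\tilde\sigma_0$ to the whole cover $V$ — indeed a $G$-torsor on a curve is not determined by one fibre — so one must use the $\A^1_L$-base to connect $0$ to $1$ through Condition \ref{cond1}, use Grothendieck--Serre to realise the resulting generic triviality Nisnevich-locally, and, crucially, feed the overlap data on $W$ into the inductive hypothesis so that the $G$-ambiguities of the various local lifts are mutually coherent and collapse in the quotient by $\pi_0^{\A^1}(G)$. Keeping this $G$-bookkeeping consistent across the nested covers, rather than relying on any single global trivialisation, is the delicate point.
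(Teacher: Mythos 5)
Your base case agrees with the paper's. The inductive step, however, has a genuine gap, concentrated in the sentence claiming that because $\mathrm{L}_{\mathrm{Nis}}\Sing$ preserves finite limits, $X_{n-1}\to\sX_{n-1}$ ``remains a torsor under $(\mathrm{L}_{\mathrm{Nis}}\Sing)^{n-1}G$'' whose analysis ``reduces, via the hypotheses on $G$, to that of ordinary $G$-torsors.'' Conditions \ref{cond1} and \ref{condti2} are statements about $H^1_{\acute{e}t}(-,G)$ of fields, of affine lines over fields, and of regular local rings, i.e.\ about genuine $G$-torsors on schemes. A map $g\colon V\to (\mathrm{L}_{\mathrm{Nis}}\Sing)^{n-1}\sX$ lands in an intermediate localization object, not in a stack; its lifting obstruction along $X_{n-1}\to\sX_{n-1}$ is not a $G$-torsor on $V$, and no argument is offered for why the two conditions would control it. Relatedly, you invoke Condition \ref{cond1} to ``propagate triviality along the $\A^1$-direction of $V\to\A^1_L$,'' but $V$ is an arbitrary Nisnevich cover of $\A^1_L$ --- an \'{e}tale $\A^1_L$-scheme, not an affine line over a field --- so Condition \ref{cond1} simply does not apply to it. You flag this difficulty yourself (``a $G$-torsor on a curve is not determined by one fibre''), but the proposed resolution stays at the level of intention. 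Finally, the assembly of the local lifts into a ghost homotopy on $X$ is asserted rather than proved; it requires a gluing statement such as the paper's Lemma \ref{lift1}.

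The missing idea --- the one that makes the paper's induction work --- is to evaluate at points rather than on covers of $\A^1_L$. The paper restricts $h$ to the generic point $\Spec L(t)\inj\A^1_L$, where Nisnevich sheafification does not change sections, so that $\pi_0((\Ln\circ\Sing)^n\sX(L(t)))=\pi_0(\Sing\circ(\Ln\Sing)^{n-1}\sX(L(t)))$ and one $\Sing$-level is stripped off, yielding an honest $\A^1$-homotopy at level $n-1$ over a field. It then spreads this out to an open $U\subseteq\A^1_L$, forms the Nisnevich distinguished square with the henselization at the missing point $p$ and its fraction field $F$, and applies the induction hypothesis to $\A^1$-homotopies over $F$ and over residue fields. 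In that setup every torsor-theoretic input --- Condition \ref{cond1}, Grothendieck--Serre (Condition \ref{condti2}), and the invariance of torsors over henselian local rings \cite[Proposition 6.1.1(b)]{ces} --- is applied to actual $G$-torsors over fields, henselian local rings, or $U$, and Lemma \ref{lift1} together with Proposition \ref{1.2} then glues the resulting Nisnevich-local lifts into a $(k+1)$-ghost homotopy on $X$, giving the equality in $\pi_0^{\A^1}(X)(L)/\pi_0^{\A^1}(G)(L)$. Without this reduction to points (or a worked-out substitute), peeling off the outermost $\mathrm{L}_{\mathrm{Nis}}\Sing$ over the base $\A^1_L$ cannot be completed from the stated hypotheses.
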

\begin{proof}
We proceed by induction on $n$ with base case $n =0$, which implies the existence of a section $h: \A^1_L \to \pi_0(\sX)$ such that $h_0 \in \pi_0(\sX)(L)$ lifts to $x$. By lemma \ref{lift} $h_0$ is a trivial torsor and Condition \ref{cond1} implies $h$ is a trivial torsor so it has a lift $\tilde{h}$ to $X(\A^1_L)$ such that $\tilde{h}_0 =x$. Now $\tilde{h}_1$ is a lift of $y$ and $\A^1$-homotopy between $x$ and $y$ is supplied by $\tilde{h}$, so $x=y \in \pi_0^{\A^1}(X)(L)$. For a general $n$ we proceed as follows:\\
Precomposing the map $ h: \A^1_L \to (\Ln \circ\Sing)^n\sX$ with the map from generic point $L(t) \to \A^1_L$ we obtain an element $a$ in $\pi_0((\Ln\circ\Sing)^n\sX(L(t))) = \pi_0(\Sing \circ (\Ln\Sing)^{n-1}  \sX(L(t))$. By standard spreading out techniques $a$ can be extended to section $\pi_0(\Sing \circ (\Ln\Sing)^{n-1} \sX(U))$ of an open subscheme $U \inj \A^1_L$. With out loss of generality assume $ \A^1_L \setminus U = \{p\}$. Consider the following Nisnevich distinguished square $$\xymatrix{
	\Spec F \ar[r] \ar[d] & \Spec (\A^1_p)^h \ar[d] \\
	U\ar[r] & \A^1_L
}$$ where $F$ is the fraction field of $ \Spec (\A^1_p)^h$.

We have elements $a \in \pi_0(\Sing \circ (\Ln\Sing)^{n-1} \sX(U))$ and $b := h\vert_{\Spec (\A^1_p)^h } \in \pi_0(\Sing \circ (\Ln\Sing)^{n-1} \sX(\A^1_p)^h )) = \pi_0((\Ln\Sing)^n\sX(\A^1_p)^h ))$ such that their restriction to $\pi_0(\Sing \circ (\Ln\Sing)^{n-1} \sX(F))$ (denoted $a_F$ and $b_F$ respectively) is equal. That means there is an $\A^1$-homotopy (that is a morphism $\A^1_F \to (\Ln\Sing)^{n-1} \sX ) $ between $a_F$ and $b_F$. We want to claim that either $a_F$ or $b_F$ lift to $X(F)$ to be able to use the induction argument. We will have two cases depending on whether $p =0$ or not.\\  
\underline{Case 1}: $p \in \A^1_L$ is  $0$. Consider a lift of $b \in \pi_0(\Sing \circ (\Ln\circ\Sing)^{n-1} \sX(\A^1_p)^h )) $ to an element $c \in \pi_0\sX((\A^1_p)^h )$. Note that $c$ restricted to the residue field of $X^h_p$ (which is $L$), denoted $c\vert_L$ and $x$ map to the same element-which is $b\vert_L$ in $\pi_0(\Sing \circ (\Ln\circ\Sing)^{n-1} \sX(L)$. Therefore we have an $\A^1$-homotopy in $(\Ln\circ\Sing)^{n-1}\sX$ and use induction hypothesis to conclude that $c\vert_L$ lifts to $X(L)$ and moreover is equal to $x$ in $\pi_0^{\A^1}X(L)$.  Furthermore by invariance of torsors between Henselian local rings and their residue fields \cite[Proposition 6.1.1(b)]{ces}, $c$ also lifts to $X(X^h_p)$. Hence $b$ lifts to $\pi_0X$ and in particular as does $b\vert_F$. Hence  we can apply induction hypothesis to deduce that $a_F$ and $b_F$ lift to $X(F)$ and their images in $\pi_0^{\A^1}X(F)$ are equal. Therefore by Prop \ref{1.2} they are $k$-ghost homotopic, for some $k \geq 0$. It remains to lift $a$ to $\pi_0(X)$ now, since we have already shown $b$ can be lifted. The map $\Spec F \to U$ factors as $\Spec F \to \Spec X^h_x \to U$, for any $x \in U$. As $a_F$ lifts to $X(F)$,  by Condition \ref{condti2} $a\vert_{ X^h_x} $ lifts to $X$. This implies we have lifted $a$ to $X$ Nisnevich locally on $U$. By Lemma \ref{lift1} we have the required $k+1$ ghost homotopy on $X$.\\ 
\underline{Case 2}: $p \in \A^1_L$ is not $0$. The map $\Spec F \to U$ factors as $\Spec F \to \Spec X^h_0 \to U$. Therefore arguing the same way as in previous case and using by invariance of torsors between Henselian local rings and their residue fields, $a_F$ lifts to $X(F)$. Hence again by induction hypothesis we deduce that $a_F$ and $b_F$ lift to $X(F)$ and their images in $\pi_0^{\A^1}X(F)$ are equal. (All the lifts to $X$here only differ by $G$ action). Hence they are $k$-ghost homotopic, for some $k \geq 0$. Condition \ref{condti2} implies $b$ lifts to $\pi_0(X)$ as $b\vert_F$ does. Also as $a\vert_F$ lifts we can argue as in the previous case to conclude $a$ also lifts Nisnevich locally. This finishes the proof.
\end{proof}

We now prove Theorem \ref{3.5}.

\begin{proof}[Proof of Theorem \ref{3.5}]
 The injectivity of the map on field valued points follows from Lemma \ref{lifttox}. 
	 For an arbitary henselian local ring $Z$ with function field $K$, we have an injection of pointed sets $\pi_0^{\A^1}(\sY)(Z) \to \pi_0^{\A^1}(\sY)(K) $ \cite[Corollary 4.17]{c}.  Moreover in case $\@Y$ is a sheaf of groups $\pi_0^{\A^1}(\@Y)$ is $\A^1$-invariant \cite[Corollary 5.2]{c} and $\pi_0^{\A^1}(\sY)(Z) \to \pi_0^{\A^1}(\sY)(K) $ is a bijection. Combining the above two facts we have an injection for the sheaf of sets $(Z) \to \sF(K)$ where $ \sF:= \pi_{0}^{\A^1}(X)/\pi_{0}^{\A^1}(G)$. Then in the following commutative diagram the left vertical map is an injection because of the discussion of previous two lines, the bottom horizontal map is an injection because of the previous paragrah. Hence the top horizontal arrow in an injection.
	$$\xymatrix{
		\sF(Z) \ar[r] \ar[d] &  \pi_{0}^{\A^1}(\sX)(Z) \ar[d] \\
		\sF(K)\ar[r] &  \pi_{0}^{\A^1}(\sX)(K)
	}$$ 
	For surjection we look at the morphism $\sF \to \pi_0(\sX) \to \pi_0^{\A^1}(\sX)$. The morphism $\pi_0(\sX) \to \pi_0^{\A^1}(\sX)$ is surjective by \cite[Corollary 3.22]{mv}. Given any henselian local ring $R$ and a morphism $\Spec R \to \pi_0(\sX)$, we have a $G$-torsor over $R$ with an $G$-equivariant map to $X$. The torsor lifts to $X$ at the closed point of $R$ iff only the morphism $R \to \sX$ lifts to $X$ \cite[Proposition 6.1.1(b)]{ces}. The torsor lifts to $X$ because $BG$ is $\A^1$ connected and one uses \cite[Theorem 1.3]{ekw}; so  we have a lift of the point to $\pi_0^{\A^1}(X)(R)/\pi_0^{\A^1}(G)(R)$.
\end{proof}

\begin{lemma}\label{new}
	Let $[X/G]$ be a quotient stack a field $k$. Suppose there exists a $k$-point $x\in X$ such that $Stab(x)=G$. Then the canonical map $p:[X/G]\rightarrow BG$ has a section. In particular the induced map $\pi_0^{\A^1}([X/G]) \to \pi_0^{\A^1}(BG)$ is surjective.
\end{lemma}

\begin{proof}
	As $X$ has a $k$-point that is fixed by $G$, the morphisms $\Spec k \overset{x}{\rightarrow}X \rightarrow \Spec k$ are $G$-equivariant and the composition is identity. Taking $G$-quotient, we get $BG\overset{\bar{x}}{\rightarrow} [X/G]\rightarrow BG$. Moreover, the composition of these maps is identity on $BG$. Thus, $\bar{x}: BG \rightarrow [X/G]$ is the desired section.
\end{proof}
\begin{remark}
	The results of this section can be summarised as follows: $X \to [X/G] \to BG$ induces morphisms of sheaves of sets $\pi_0^{\A^1}(X)/\pi_0^{\A^1}(G) \to \pi_0^{\A^1}([X/G]) \to \pi_0^{\A^1}(BG)$ which is always exact in the middle (in the sense of pointed sheaves of sets). Theorem \ref{main1} says the first map $\pi_0^{\A^1}(X)/\pi_0^{\A^1}(G) \to \pi_0^{\A^1}([X/G]) $ is an injection of sheaves of sets (which is a stronger statement than saying injection of pointed sheaves of sets). Lemma \ref{new} gives a sufficient condition for $\pi_0^{\A^1}([X/G]) \to \pi_0^{\A^1}(BG)$ to be surjective.
\end{remark}

\section{Applications}\label{section-applications}
In this section we apply the formula obtained for the $\pi_0^{\A^1}$ of the  quotient stack to compute $\pi_0^{\A^1}$ of stacky curves as well as the stack $\overline{\@M}_{1,1}$.

 \begin{lemma}
Let $\sX$ be an $\A^1$-connected stacky curve. Then the coarse moduli space of $\sX$, denoted $X$, is also $\A^1$-connected.
\end{lemma}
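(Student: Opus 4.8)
The plan is to exploit the coarse moduli morphism $c \colon \sX \to X$ together with the fact that, among smooth proper curves, $\A^1$-connectedness singles out $\P^1$. First I would record the relevant properties of $c$. Since $\sX$ is a smooth, proper, tame Deligne--Mumford stacky curve, its coarse space $X$ is a smooth, proper, geometrically connected curve over $k$: étale-locally $\sX$ is the quotient of a smooth curve by a finite tame group, and in dimension one such a quotient is again smooth (the invariants of a DVR under a finite tame action form a DVR), and $|\sX| \simeq |X|$ is preserved under taking the coarse space. In particular $c$ is surjective and $X$ is positive-dimensional. The advantage of working through $X$ rather than trying to show that $c$ is a Nisnevich epimorphism directly is that $c$ is genuinely ramified over the stacky points, so lifting an $R$-point of $X$ whose closed point lands in the stacky locus requires extracting roots and need not be possible Nisnevich-locally; the argument below sidesteps this entirely.

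Next I would split according to the genus of $X$. If $X \cong \P^1$ there is nothing to prove, as $\P^1$ is $\A^1$-connected. So suppose $X$ has genus $\geq 1$. The key input is that such a curve is $\A^1$-rigid: every morphism $\A^1_U \to X$ factors through the projection $\A^1_U \to U$, so the representable sheaf $X$ is already $\A^1$-invariant and a Nisnevich sheaf, whence the canonical map $u_X \colon X \to \pi_0^{\A^1}(X)$ is an isomorphism (see \cite{mv}, \cite{amorel}). I would then apply naturality of the unit $u \colon \mathrm{id} \Rightarrow \pi_0^{\A^1}$ to $c$, which gives the identity $c_* \circ u_{\sX} = u_X \circ c$ of morphisms $\sX \to \pi_0^{\A^1}(X)$. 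Because $\sX$ is $\A^1$-connected, $\pi_0^{\A^1}(\sX) = \ast$ and $u_{\sX} \colon \sX \to \ast$ is the terminal map; hence $u_X \circ c$ factors through $\ast$ and is therefore constant. Since $u_X$ is an isomorphism, this forces $c \colon \sX \to X$ itself to be constant, i.e.\ to factor through a single point of $X$. But $c$ is surjective and $X$ is a curve, a contradiction. Therefore $X \cong \P^1$, and $X$ is $\A^1$-connected.

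The main obstacle is the genus $\geq 1$ step, and specifically making the $\A^1$-rigidity claim rigorous at the level of sheaves: I must ensure that positive-genus curves are $\A^1$-rigid not merely in the sense that they admit no nonconstant map from $\A^1$ over a field, but that the representable sheaf $X$ is $\A^1$-local, so that $u_X$ is an isomorphism of Nisnevich sheaves and the naturality argument has force. Granting this (which is standard for smooth proper curves without rational curves), the remaining ingredients---smoothness and geometric connectedness of the coarse space, and surjectivity of $c$---are routine properties of coarse moduli spaces of tame stacky curves, and the conclusion follows as above.
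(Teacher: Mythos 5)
Your overall strategy is sound and genuinely different from the paper's: the paper argues by contrapositive, lifting two distinct sections of $\pi_0^{\A^1}(X)$ over an algebraically closed field $L$ to $\pi_0(\sX)$ (using that the coarse map induces a bijection $\pi_0(\sX)(L)\simeq X(L)$ on such points) and then pushing a ghost homotopy in $\sX$ forward along $\sX\rightarrow X$; you instead combine the classification of smooth proper curves with $\A^1$-rigidity and naturality of the unit $\mathrm{id}\Rightarrow \pi_0^{\A^1}$. The rigidity branch of your argument is correct: for $X$ of genus $\geq 1$ the map $u_X\colon X\rightarrow \pi_0^{\A^1}(X)$ is an isomorphism, naturality forces the coarse map $c$ to factor through a single $k$-point, and this contradicts surjectivity of $c$ onto a positive-dimensional target.

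However, your case analysis has a gap over a general base field, and the lemma is stated over an arbitrary $k$: a smooth proper geometrically connected curve of genus $0$ need not be $\P^1$; it is a conic, isomorphic to $\P^1$ only when it has a rational point. An anisotropic conic falls into neither of your two cases: it is not $\P^1$, and it is not $\A^1$-rigid (over a quadratic splitting field $L$ one has $X_L\cong \P^1_L$, so $X(\A^1_L)\neq X(L)$), so neither branch applies to it. Worse, in this case the conclusion you want is false for $X$ unconditionally --- an anisotropic conic is not $\A^1$-connected, since for smooth proper schemes rational points surject onto sections of $\pi_0^{\A^1}$ over fields (see \cite{amorel}) and $X(k)=\emptyset$ --- so this case must be ruled out using the hypothesis on $\sX$, not by any statement about curves alone. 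The patch is short: if $\sX$ is $\A^1$-connected, then $\pi_0(\sX)(k)\rightarrow \pi_0^{\A^1}(\sX)(k)=\ast$ is surjective by \cite[Corollary 3.22]{mv} together with the fact that (finitely generated separable extensions of) $k$ are points of the Nisnevich site; hence $\sX(k)\neq\emptyset$, so $X(k)\neq\emptyset$ and a genus-$0$ coarse space is automatically $\P^1$. With this step added your proof is complete; the gap is vacuous over algebraically closed fields, e.g.\ in the setting of Theorem \ref{4.15} where the lemma is applied.
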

\begin{proof}
	We prove the contrapositive. Assume $X$ is not $\A^1$-connected. Then for some algebraically closed field $L$ we have $x \neq y \in \pi_0^{\A^1}(X)(L)$. Then by the property of coarse moduli spaces $\pi_0(\sX)(L) \simeq X(L)$ so we lift $x$ and $y$ to $\pi_0(\sX)$. Any ghost homotopy between the lifts of $x$ and $y$ can be composed with the map $\sX \to X$ to give a ghost homotopy and hence an equality of $x$ and $y$ in $\pi_0^{\A^1}(X)(L)$ which is a contradiction.
\end{proof}
 
\begin{lemma}\label{onestackypoint}
$\P(m,n)$ with $m,n$ coprime integers is $\A^1$ -connected. 
\end{lemma}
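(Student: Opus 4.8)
The plan is to apply Theorem \ref{3.5} to the presentation $\P(m,n)=[(\A^2\setminus\{0\})/\mathbb{G}_m]$, where $\mathbb{G}_m$ acts with weights $(m,n)$ as in the weighted projective example of Section \ref{subsection-stacks-basics}. First I would verify that $G=\mathbb{G}_m$ meets Conditions \ref{cond1} and \ref{condti2}. Since $\mathbb{G}_m$-torsors are line bundles, $H^1_{\acute{e}t}(L,\mathbb{G}_m)=\Pic(L)=0$ and $H^1_{\acute{e}t}(\A^1_L,\mathbb{G}_m)=\Pic(\A^1_L)=0$, so torsors are $\A^1$-invariant over fields; and $\Pic(R)=0$ for a regular local ring $R$ makes the Grothendieck--Serre injectivity of Condition \ref{condti2} automatic. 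A stabiliser computation shows the action is free away from the two coordinate axes, so the generic stabiliser is trivial and Theorem \ref{3.5} applies, identifying $\pi_0^{\A^1}(\P(m,n))$ with the pushout $\sF$ of that theorem's square.

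Next I would compute the four corners. Set $X=\A^2\setminus\{0\}$ and let $Z\subset X$ be the non-free locus. The stabiliser of $(x,0)$ with $x\neq 0$ is $\mu_m$ and that of $(0,y)$ with $y\neq 0$ is $\mu_n$; hence $Z$ is the disjoint union of the punctured axes $Z_1,Z_2\cong\mathbb{G}_m$ carrying nontrivial stabiliser (dropping a factor whose weight is $1$). On $Z_1$ the subgroup $\mu_m\subset\mathbb{G}_m$ acts trivially while $\mathbb{G}_m/\mu_m$ acts simply transitively, so $[Z_1/\mathbb{G}_m]\cong B\mu_m$, and likewise $[Z_2/\mathbb{G}_m]\cong B\mu_n$; thus $[Z/G]\cong B\mu_m\sqcup B\mu_n$. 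Since $X=\A^2\setminus\{0\}$ is $\A^1$-connected, the bottom-left corner $\pi_0^{\A^1}(X)/\pi_0^{\A^1}(G)$ is a single point. Because $\mu_m$ and $\mu_n$ are finite \'etale of order prime to $\Char k$, the universal torsors $\Spec k\to B\mu_m$ and $\Spec k\to B\mu_n$ are $\A^1$-coverings by \cite{Morel-book}; the associated long exact sequence (with $\A^1$-connected total space $\Spec k$) forces $B\mu_m$ and $B\mu_n$ to be $\A^1$-connected, so the top-right corner $\pi_0^{\A^1}([Z/G])$ is $\{\ast\}\sqcup\{\ast\}$.

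Finally I would read off the pushout. As the bottom-left corner is a single point, the pushout identifies the whole image of the top horizontal map $f\colon \pi_0^{\A^1}(Z)/\pi_0^{\A^1}(G)\to\pi_0^{\A^1}([Z/G])$ to one point; so it suffices to check that $f$ is an epimorphism of Nisnevich sheaves. Each $Z_i\to[Z_i/G]$ admits a section over any henselian local ring (the points $(1,0)$, $(0,1)$ already give $k$-points), and the targets $\pi_0^{\A^1}(B\mu_m)$, $\pi_0^{\A^1}(B\mu_n)$ are single points, so both components are hit on every stalk. Hence $f$ is surjective and the pushout collapses to a single point, i.e. $\pi_0^{\A^1}(\P(m,n))=\ast$.

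The step I expect to be the main obstacle is the identification $[Z/G]\cong B\mu_m\sqcup B\mu_n$ together with the surjectivity of the gluing map $f$: this is exactly the mechanism by which the two classifying-stack components of the stacky locus become identified in the pushout, and it is where coprimality enters (the stacky points organise into $B\mu$'s whose torsors connect back to the axes, so no separate components survive). Everything else is routine once Theorem \ref{3.5} is in force.
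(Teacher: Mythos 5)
Your setup is sound and matches the paper's strategy: verifying Conditions \ref{cond1} and \ref{condti2} for $\mathbb{G}_m$, identifying the stacky locus $Z$ as (a union of) punctured axes with $[Z/\mathbb{G}_m]\cong B\mu_m\sqcup B\mu_n$, observing that the bottom-left corner of the pushout square is a point since $\A^2\setminus\{0\}$ is $\A^1$-connected, and reducing to showing that the top horizontal map $f$ is an epimorphism of Nisnevich sheaves. However, the step you use to get that epimorphism --- the claim that $\pi_0^{\A^1}(B_{\acute{e}t}\mu_m)$ is a point --- is false, and the covering-space argument you give for it does not work. The long exact sequence of the $\A^1$-covering $\Spec k\to B_{\acute{e}t}\mu_m$ terminates in $\pi_0^{\A^1}(\Spec k)\to \pi_0^{\A^1}(B\mu_m)$ and carries no surjectivity information at that last spot; the image of this map is only the base point (the class of the trivial torsor). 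In fact, by the paper's Lemma \ref{3.7}, $\pi_0^{\A^1}(B_{\acute{e}t}\mu_m)$ is the cokernel (as Nisnevich sheaves) of the $m$-th power map $\mathbb{G}_m\to\mathbb{G}_m$, whose sections over a field $L$ are $L^{*}/(L^{*})^{m}$; taking $L=k(t)$ this is nontrivial even when $k$ is algebraically closed. The paper is explicit about this: the example following Lemma \ref{onestackypoint} states that $\A^1$ with one stacky point $B_{\acute{e}t}\mu_n$ is \emph{not} $\A^1$-connected, and Theorem \ref{theorem-mun-gerbes-Pn} would be vacuous if $\pi_0^{\A^1}(B\mu_l)$ were always trivial. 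Since your surjectivity of $f$ is deduced solely from this false claim, the proof has a genuine gap.

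The correct mechanism --- and what the paper's own proof uses --- is that $f$ is an \emph{isomorphism}, not that its target is a point. On the component $Z_1\cong\mathbb{G}_m$ (the punctured axis with stabiliser $\mu_m$), the sheaf $\pi_0^{\A^1}(\mathbb{G}_m)=\mathbb{G}_m$ acts on $\pi_0^{\A^1}(Z_1)=\mathbb{G}_m$ through $m$-th powers, so the top-left corner is $\mathbb{G}_m/\mathbb{G}_m^m\sqcup\mathbb{G}_m/\mathbb{G}_m^n$; the map to $\pi_0^{\A^1}(B\mu_m)\sqcup\pi_0^{\A^1}(B\mu_n)$ sends a unit $x$ to the class of its torsor of $m$-th roots, and Lemma \ref{3.7} (i.e.\ Kummer theory) identifies $\mathbb{G}_m/\mathbb{G}_m^m\xrightarrow{\sim}\pi_0^{\A^1}(B\mu_m)$. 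Hence $f$ is an isomorphism, so the pushout collapses onto the bottom-left corner, which is a point; this yields $\pi_0^{\A^1}(\P(m,n))=\ast$ without ever needing $B\mu_m$ to be connected. With this replacement your argument goes through. One further correction: coprimality of $m$ and $n$ is used exactly to ensure the generic stabiliser is trivial (a point with both coordinates nonzero has stabiliser $\mu_{\gcd(m,n)}$), i.e.\ so that Theorem \ref{3.5} applies at all --- not in the gluing step, as you suggest in your closing paragraph.
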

\begin{proof}
	This is immediate from the fact that $\sX \simeq (\A^2 \setminus 0 )/\mathbb{G}_m$, where $\mathbb{G}_m$ acts by $\lambda.(x,y) = (\lambda^mx, \lambda^n y)$ and Theorem \ref{main1}.
\end{proof}

\begin{example}
	In contrast to the above lemma, $\A^1$ with one stacky point, say $B_{\acute{e}t}\mu_n$ is not $\A^1$-connected. Using Lemma \ref{new} one sees that it's $\pi_0^{\A^1}$ is isomorphic to $\pi_0^{\A^1}(B_{\acute{e}t}\mu_n)$. 
\end{example}
\begin{example}
$\P^1$ with two stacky points, with residual gerbe $B\mu_n$ at both the points. Then $\sX = \P^1/\mu_n$. So by Lemma \ref{new}, $\sX$ is not $\A^1$-connected.
\end{example}
The next example illustrates that the data of the coarse moduli along with the ramification divisor is not enough to compute $\pi_0^{\A^1}$ of the stack.
\begin{example}
	Let $C$ be any hyperelliptic curve of genus $g$. Then $C/\mu_2$-where action of $\mu_2$ is induced from the degree $2$ finite map to $\P^1$- is a stacky $\P^1$ with $2g+2$ stacky points. The residual gerbe at each of these stacky points is $B\mu_2$. Since $C$ is $\A^1$-rigid, that is $\pi_0^{\A^1}(C) \simeq C$, Theorem \ref{3.5} implies there is no non constant $\A^1$ inside the stack.
\end{example}
Now we compute homotopy sheaves of  $\mu_n$-gerbes over the projective spaces. We do this by computing $\pi_0^{\A^1}$ and $\pi_1^{\A^1}$. We use the quotient stack  description given in Example \ref{1.7} and apply Theorem \ref{3.5} to compute the $\pi_0^{\A^1}$. To compute $\pi_1^{\A^1}$ we first fix the base point, (one which lifts to the smooth cover-which is a $\mathbb{G}_m$-torsor over $\P^1$, as described in  Example \ref{1.7}  ) and then use the $\A^1$-fiber sequence. As an intermediate step we look at $\mathbb{G}_m$-torsors over $\P^n$ (Prop.\ref{3.6}). 
\begin{lemma}\label{3.7}
	$B_{\acute{e}t}\mu_n \to B\mathbb{G}_m \to B\mathbb{G}_m$ is an $\A^1$-fibration, (where $B\mathbb{G}_m \to B\mathbb{G}_m$ is induced by $n^{th}$ power map $\mathbb{G}_m \to \mathbb{G}_m$). In particular there is an exact sequence of Nisnevich sheaves $$ 0 \to \mu_n \to \mathbb{G}_m \to \mathbb{G}_m \to \pi_0^{\A^1}(B\mu_n) \to 0$$ Moreover any $\mu_n$-root gerbe $Y \to X$ fits in a fiber sequence $$B_{\acute{e}t}\mu_n  \to Y \to X$$
\end{lemma}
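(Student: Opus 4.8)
The plan is to realise the sequence as the delooping of the Kummer sequence, to prove that this delooping survives $\A^1$-localisation, and then to read off both the exact sequence and the root-gerbe statement by base change. First I would recall that the $n$-th power map fits into a short exact sequence of \'etale sheaves of abelian groups
$$1 \to \mu_n \to \mathbb{G}_m \xrightarrow{(-)^n} \mathbb{G}_m \to 1,$$
which is exactly the Kummer sequence of Example \ref{1.7} (surjectivity being in the \'etale topology, where $n$ is invertible). Applying the bar construction $B(-)$, which deloops a short exact sequence of sheaves of groups into a fibre sequence of simplicial sheaves, yields the local fibre sequence $B_{\acute{e}t}\mu_n \to B\mathbb{G}_m \xrightarrow{Bn} B\mathbb{G}_m$, in which $B_{\acute{e}t}\mu_n$ is the \'etale-local homotopy fibre of $Bn$. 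Concretely this identification records the fact that the fibre of $Bn$ over a point classifies a line bundle together with a trivialisation of its $n$-th tensor power, i.e. a $\mu_n$-torsor.

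The main step is to upgrade this to an $\A^1$-fibration, i.e. to show that $\mathrm{L}_{\mathrm{mot}}$ preserves the above fibre sequence. Here I would use that $\mathbb{G}_m$ is strongly $\A^1$-invariant (Morel), so that $B\mathbb{G}_m \simeq K(\mathbb{G}_m,1)$ is $\A^1$-local with $\pi_1^{\A^1}(B\mathbb{G}_m)=\mathbb{G}_m$ and all other $\A^1$-homotopy sheaves trivial; thus both total space and base are $\A^1$-local and $\A^1$-connected. The remaining content is that the local fibre $B_{\acute{e}t}\mu_n$ already computes the $\A^1$-homotopy fibre, which I would deduce from the fact that $\mu_n$ is finite \'etale of order invertible in the base: by \cite[Lemma 6.5(2)]{Morel-book} every \'etale $\mu_n$-torsor is an $\A^1$-covering, equivalently \'etale cohomology with $\mu_n$-coefficients is $\A^1$-invariant, and this homotopy invariance is precisely what is needed to commute $\mathrm{L}_{\mathrm{mot}}$ past the fibre of $Bn$. \textbf{This is the main obstacle}: $\mathrm{L}_{\mathrm{mot}}$ does not preserve fibre sequences in general, and the argument must genuinely exploit that the fibre is the classifying space of an $\A^1$-covering group rather than of an arbitrary sheaf (it is the part of the proof for which we are indebted to M.\ Hoyois). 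I would expect it to require working in $\sH_{\acute{e}t}(S)$ and comparing with $\sH(S)$ through the adjunction (\ref{equation-MV-adjunction}).

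Granting the $\A^1$-fibration, the exact sequence falls out of the associated long exact sequence of $\A^1$-homotopy sheaves
$$\pi_1^{\A^1}(B_{\acute{e}t}\mu_n) \to \pi_1^{\A^1}(B\mathbb{G}_m) \xrightarrow{(-)^n} \pi_1^{\A^1}(B\mathbb{G}_m) \to \pi_0^{\A^1}(B\mu_n) \to \pi_0^{\A^1}(B\mathbb{G}_m).$$
Substituting $\pi_1^{\A^1}(B\mathbb{G}_m)=\mathbb{G}_m$, $\pi_0^{\A^1}(B\mathbb{G}_m)=\ast$ and $\pi_2^{\A^1}(B\mathbb{G}_m)=0$ (the last making $\pi_1^{\A^1}(B\mu_n)\to\mathbb{G}_m$ injective with image $\Ker((-)^n)=\mu_n$), the sequence collapses to $0 \to \mu_n \to \mathbb{G}_m \xrightarrow{(-)^n} \mathbb{G}_m \to \pi_0^{\A^1}(B\mu_n) \to 0$; in particular $\pi_0^{\A^1}(B\mu_n)$ is identified with the Nisnevich cokernel of the $n$-th power map.

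Finally, for the root-gerbe statement I would invoke the definition of $X\sqrt[n]{L}$ as the $2$-fibre product of $L:X\to B\mathbb{G}_m$ with $Bn:B\mathbb{G}_m\to B\mathbb{G}_m$ (Example \ref{2.9}), which is a homotopy pullback. Since fibre sequences are stable under base change, pulling the fibre sequence $B_{\acute{e}t}\mu_n \to B\mathbb{G}_m \xrightarrow{Bn} B\mathbb{G}_m$ back along $L$ exhibits the fibre of $Y=X\sqrt[n]{L}\to X$ as $B_{\acute{e}t}\mu_n$, giving the fibre sequence $B_{\acute{e}t}\mu_n \to Y \to X$. As every $\mu_n$-root gerbe over $X$ arises in this way, this proves the last assertion.
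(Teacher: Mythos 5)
Your proposal is correct and takes essentially the same route as the paper's proof: deloop the Kummer sequence, use that $B\mathbb{G}_m$ is a motivic space (so the fibre sequence lives in $\sH(k)$), identify the fibre with $B_{\acute{e}t}\mu_n$ using the input that $\mu_n$ is finite étale of invertible order, read off the exact sequence from the long exact sequence of $\A^1$-homotopy sheaves, and deduce the root-gerbe statement by base change along the defining cartesian square. The only difference is one of packaging: rather than arguing that $\mathrm{L}_{\mathrm{mot}}$ preserves the étale-local fibre sequence, the paper forms the fibre $F$ of $B\mathbb{G}_m\to B\mathbb{G}_m$ directly in $\sH(k)$ and checks that the Kummer comparison map $B_{\acute{e}t}\mu_n\to F$ induces isomorphisms on all $\A^1$-homotopy sheaves --- two phrasings of the same verification, resting on the same input about $B_{\acute{e}t}\mu_n$.
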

\begin{proof}
	Since $B\mathbb{G}_m$ is a motivic space, one has a fiber sequence  $F \to B\mathbb{G}_m \to B\mathbb{G}_m$. It remains to identify $F$ with $B_{\acute{e}t}(\mu_n)$. By Kummer exact sequence we have $B_{\acute{e}t}\mu_n \to BG_m \to BG_m$ and therfore there is an induced map $B_{\acute{e}t}\mu_n \to F$. The fiber sequence induces the long exact sequence of homotopy sheaves $$ 1 \to \pi_1^{\A^1}(F) \to \mathbb{G}_m \to \mathbb{G}_m \to \pi_0(F) \to 1$$ So we have $\pi_i^{\A^1}(F) \simeq \pi_i^{\A^1}(B_{\acute{e}t}(\mu_n))$ induced by the morphism $ B_{\acute{e}t}(\mu_n) \to F$. The second part follows from the fact that a $\mu_n$ root gerbe over $X$ (where $X$ is a scheme or a stack) is a base change of the map $B\mathbb{G}_m \to B\mathbb{G}_m$.
\end{proof}

\begin{definition}\label{redstr}
	Let $H \to G$ be a morphism of affine group schemes over a base scheme $S$. Let $P \to X$ be a principal $H$-bundle, then the principal $G$-bundle induced by $P$ via $H \to G$ is $(G \times P)/H \to X$, where $H$ acts on $G \times P$ by $h.(g,p) = (gh^{-1},hp)$ and the $G$-action is given by $g'(g,p) = (g'g,p)$
\end{definition}

First, for the torsor corresponding to $\sO(1)$, $\A^{n+1} \setminus0 \to \P^n$, there is a short exact sequence $$ 1\to \pi_1^{\A^1}(\A^{n+1} \setminus 0) \to \pi_1^{\A^1}(\P^n) \xrightarrow{\theta} \mathbb{G}_m \to 1$$ When $n \geq 2$, $\theta$ is an isomorphism.
\begin{notation}\label{3.9}
	For the rest of the paper $\theta$ will denote the map  
	$\pi_1^{\A^1}(\P^n) \to \mathbb{G}_m $ described above, $\psi^d$ will denote the $d$-fold map $\mathbb{G}_m \to  \mathbb{G}_m$, $\mathbb{G}_m^d:= \mathrm{Im}(\psi^d)$ and $\rho_{n}: \mathbb{G}_m \to \pi_0^{\A^1}(B\mu_n)$ from Corollary \ref{3.7}.
\end{notation}
The first part of the following proposition is standard, see \cite[4.1.2]{DiL} and the rest of the proposition easily follows from the standard arguments. 
\begin{proposition}\label{3.6}
	Let $X$ be the $\mathbb{G}_m$-torsor over $\P^n$ corresponding to the line bundle $\sO(d)$, $n \neq 0$. Then \begin{enumerate}
		\item $X \simeq (\A^{n+1} \setminus 0)/\mu_d$
		\item $\pi_0^{\A^1}(X) \simeq \pi_0^{\A^1}(B\mu_d)$.
		\item there is an exact sequence of Nisnevich sheaves $$ 1 \to \pi_1^{\A^1}(X) \to \pi_1^{\A^1}(\P^n) \xrightarrow{\phi} \mathbb{G}_m \to \pi_0^{\A^1}(X) \to 1$$ where $\phi$ is the composition of $\theta$ and $ \psi^d: \mathbb{G}_m \to \mathbb{G}_m$. Therefore $\pi_0^{\A^1}(X) \simeq \pi_0^{\A^1}(B\mu_d)$ and for $n \geq 2$, $\pi_1^{\A^1}(X) \simeq \mu_d$
		\item $\pi_i^{\A^1}(X) \simeq \pi_i^{\A^1}(\P^n)$, for $i \geq 2$.
	\end{enumerate}   
\end{proposition}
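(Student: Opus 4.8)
The plan is to prove (1) by an explicit computation with the quotient, and then to deduce (2)--(4) from the long exact sequence attached to the $\A^1$-covering $\mathbb{G}_m \to X \to \P^n$, the only nonformal input being the identification of the connecting map.

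\smallskip

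\noindent\textbf{Part (1).} The $\mathbb{G}_m$-torsor attached to $\sO(1)$ is the standard projection $\A^{n+1}\setminus 0 \to \P^n$. Restricting the scaling action of $\mathbb{G}_m$ to the subgroup $\mu_d$ and forming the quotient, the residual action of $\mathbb{G}_m\simeq\mathbb{G}_m/\mu_d$ exhibits $(\A^{n+1}\setminus 0)/\mu_d \to \P^n$ as a $\mathbb{G}_m$-torsor. Identifying $\mathbb{G}_m/\mu_d\simeq\mathbb{G}_m$ via $\psi^d$, the transition cocycle $z_i/z_j$ of $\sO(1)$ is replaced by $(z_i/z_j)^d$, which is the cocycle of $\sO(d)$. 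Hence $(\A^{n+1}\setminus 0)/\mu_d$ is exactly the torsor $X$.

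\smallskip

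\noindent\textbf{Parts (2)--(4).} Since $\mathbb{G}_m$ is strongly $\A^1$-invariant, a $\mathbb{G}_m$-torsor in the Nisnevich topology is an $\A^1$-covering, so $X \to \P^n$ yields a fiber sequence $\mathbb{G}_m \to X \to \P^n$, where we point $X$ by $1$ in the fiber over the basepoint of $\P^n$. Because $\mathbb{G}_m$ is $\A^1$-rigid, $\pi_0^{\A^1}(\mathbb{G}_m)\simeq\mathbb{G}_m$ and $\pi_i^{\A^1}(\mathbb{G}_m)=0$ for $i\geq 1$. Feeding these into the long exact sequence of the Remark, and using that $\P^n$ is $\A^1$-connected, the terms around the fiber collapse: in degrees $i\geq 2$ both $\pi_i^{\A^1}(\mathbb{G}_m)$ and $\pi_{i-1}^{\A^1}(\mathbb{G}_m)$ vanish, giving $\pi_i^{\A^1}(X)\simeq \pi_i^{\A^1}(\P^n)$, which is (4); and the low-degree part truncates to the four-term exact sequence
\[
1 \to \pi_1^{\A^1}(X) \to \pi_1^{\A^1}(\P^n) \xrightarrow{\ \phi\ } \mathbb{G}_m \to \pi_0^{\A^1}(X) \to 1
\]
of (3), with $\phi$ the connecting homomorphism.

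\smallskip

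\noindent\textbf{Identification of $\phi$ and conclusion.} Here I would use part (1): the quotient map $q\colon \A^{n+1}\setminus 0 \to X$ is a $\mu_d$-cover compatible with the projections to $\P^n$, so it induces a morphism of fiber sequences from $\mathbb{G}_m \to \A^{n+1}\setminus 0 \to \P^n$ to $\mathbb{G}_m \to X \to \P^n$ which is the identity on the base and is $\psi^d\colon\mathbb{G}_m\to\mathbb{G}_m$ on the fibers (the map $\mathbb{G}_m\to\mathbb{G}_m/\mu_d\simeq\mathbb{G}_m$). Naturality of the connecting homomorphism then gives $\phi=\psi^d\circ\theta$, where $\theta$ is the connecting map of the $\sO(1)$-torsor from Notation \ref{3.9}. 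Since $\theta$ is surjective, $\mathrm{Im}(\phi)=\mathrm{Im}(\psi^d)$, so $\mathrm{coker}(\phi)=\mathrm{coker}(\psi^d)\simeq \pi_0^{\A^1}(B\mu_d)$ by Lemma \ref{3.7}; this proves (2) and the $\pi_0$-statement of (3). For $n\geq 2$, where $\theta$ is an isomorphism, one has $\ker(\phi)=\ker(\psi^d)=\mu_d$ by Lemma \ref{3.7}, giving $\pi_1^{\A^1}(X)\simeq\mu_d$.

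\smallskip

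The main obstacle is the identification $\phi=\psi^d\circ\theta$: one must pin down the map $\psi^d$ induced on fibers by the $\mu_d$-quotient $q$ and invoke naturality of the connecting map, since this is precisely where the $d$-th power enters the homotopy sheaves. Once this is in place, everything else is a formal consequence of the long exact sequence and of the computation of $\ker$ and $\mathrm{coker}$ of $\psi^d$ in Lemma \ref{3.7}.
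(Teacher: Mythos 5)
Your proposal is correct, and it follows the paper's overall skeleton (prove the quotient description, run the long exact sequence of the $\A^1$-covering $\mathbb{G}_m \to X \to \P^n$, then compute $\ker$ and $\mathrm{coker}$ of $\phi$ via Lemma \ref{3.7}), but you identify the connecting map $\phi$ by a genuinely different argument. The paper never uses part (1) for this step: it invokes representability, $\Pic(\P^n)\simeq \Hom_{\sH(k)}(\P^n,B\mathbb{G}_m)\simeq \Hom_{Gr}(\pi_1^{\A^1}(\P^n),\mathbb{G}_m)$, and reads off $\phi=\psi^d\circ\theta$ from the factorization of the classifying map of $\sO(d)=\sO(1)^{\otimes d}$ as $\P^n\xrightarrow{\sO(1)}B\mathbb{G}_m\xrightarrow{B\psi^d}B\mathbb{G}_m$. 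You instead make part (1) do real work: the $\psi^d$-equivariant quotient map $q\colon \A^{n+1}\setminus 0\to X$ gives a morphism of fiber sequences over $\P^n$ which is $\psi^d$ on fibers, and naturality of the connecting map yields $\phi=\psi^d\circ\theta$. Your route is more geometric and elementary, but it carries one verification you should make explicit: that the commutative square over $\P^n$ really induces a morphism of fiber sequences in $\sH(k)$ with the stated map on fibers — this holds because both torsors are $\A^1$-coverings (so their homotopy fibers agree with the scheme-theoretic fibers $\mathbb{G}_m$ and $\mathbb{G}_m/\mu_d\simeq\mathbb{G}_m$, and $\psi^d$-equivariance of $q$ identifies the induced map with $\psi^d$). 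The paper's classifying-map argument, by contrast, is more portable: it requires no explicit model of $X$ and is exactly the mechanism reused later for $\mu_n$-gerbes over $\P^d$ (Theorem \ref{theorem-mun-gerbes-Pn}) and for torsors attached to line bundles admitting roots (Lemma \ref{notsurj}), where no analogue of your quotient map is available. Your part (1) (transition-cocycle computation) also differs mildly from the paper's construction of explicit mutually inverse maps via $(\A^{n+1}\setminus 0)\times\mathbb{G}_m$ modulo the weighted action; both are fine.
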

\begin{proof}
	 Since $X$ is a $\mathbb{G}_m$-torsor over $\P^n$ and as $\P^n$ is $\A^1$-connected, we have an exact sequence $$ 1 \to \pi_1^{\A^1}(X) \to \pi_1^{\A^1}(\P^n) \xrightarrow{\phi} \mathbb{G}_m \to \pi_0^{\A^1}(X) \to 1$$ Therefore $ \pi_0^{\A^1}(X)  \simeq \mathbb{G}_m/\mathrm{Im}(\phi)$. We now compute $\mathrm{Im}(\phi)$. For any scheme $X$, we have $\Pic(X) \simeq \Hom_{\sH(k)}(X,B\mathbb{G}_m) \simeq \Hom_{Gr}(\pi_1^{\A^1}(X), \mathbb{G}_m)$, where $Gr$ is the category of strongly $\A^1$-invariants Nisnevich sheaves. So in the above short exact sequence $ \pi_1^{\A^1}(\P^1) \to \mathbb{G}_m$ is induced via the morphism $X \xrightarrow{\sO(1)} B\mathbb{G}_m \to B\mathbb{G}_m $  where $B\mathbb{G}_m \to B\mathbb{G}_m$ is induced by $\psi^d$. Hence $\phi$ is the composition $ \pi_1^{\A^1}(\P^n) \xrightarrow{\theta} \mathbb{G}_m \to \mathbb{G}_m$. Note $\theta$ is already surjective therefore $\mathrm{Im}(\phi)$ is equal to image of $\mathbb{G}_m$ inside $\mathbb{G}_m$ via $\psi^d$ (denoted $\mathbb{G}_m^d)$, which by Corollary \ref{3.7} is precisely $\pi_0^{\A^1}(B\mu_d)$. As $\pi_1^{\A^1}(\P^n) \simeq \mathbb{G}_m$ for $n \geq 2$, $\pi_1^{\A^1}(X) \simeq \mu_d$. \\
\end{proof}
\begin{lemma}\label{notsurj}
	Let $X$ be an $\A^1$-connected variety or a stack and $Y$ be a non trivial $\mathbb{G}_m$ -torsor over $X$. If for the line bundle $\@L \in \Pic(X)$ corresponding to $Y$ admits an $n$-root for some $n \in \Z$, then $Y$ is not $\A^1$-connected.
\end{lemma}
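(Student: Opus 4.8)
The plan is to read off $\pi_0^{\A^1}(Y)$ from the $\A^1$-fiber sequence of the torsor, exactly as in the proof of Proposition \ref{3.6}, and then to produce a nontrivial quotient out of the root structure. Write $\mathcal{L}\simeq \mathcal{M}^{\otimes n}$ for a line bundle $\mathcal{M}$; we may assume $n\geq 2$ (replacing $\mathcal{M}$ by its dual if $n<0$, and noting that the hypothesis is vacuous for $|n|=1$, which is consistent with the fact that the torsor $\A^{n+1}\setminus 0\to \P^n$ for $\mathcal{O}(1)$ \emph{is} $\A^1$-connected). Since $Y\to X$ is a $\mathbb{G}_m$-torsor and $\mathbb{G}_m$ is strongly $\A^1$-invariant, it is an $\A^1$-covering by \cite[Lemma 6.5]{Morel-book}, so (after choosing a base point, which exists as $X$ is $\A^1$-connected) $Y$ is the homotopy fiber of the classifying map $c\colon X\to B\mathbb{G}_m$ of $\mathcal{L}$, and we obtain the exact sequence of Nisnevich sheaves
$$\pi_1^{\A^1}(X)\xrightarrow{\ \phi\ }\mathbb{G}_m\to \pi_0^{\A^1}(Y)\to \pi_0^{\A^1}(X),$$
with $\phi=c_\ast$. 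As $X$ is $\A^1$-connected, $\pi_0^{\A^1}(X)=\ast$, and therefore $\pi_0^{\A^1}(Y)\simeq \mathbb{G}_m/\mathrm{Im}(\phi)$.

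The next step is to factor $\phi$ through the power map. Using the identification $\Pic(X)\simeq \Hom_{\sH(k)}(X,B\mathbb{G}_m)\simeq \Hom_{Gr}(\pi_1^{\A^1}(X),\mathbb{G}_m)$ from the proof of Proposition \ref{3.6} (valid since $X$ is pointed $\A^1$-connected and $\mathbb{G}_m$ is strongly $\A^1$-invariant), the relation $\mathcal{L}\simeq \mathcal{M}^{\otimes n}$ corresponds to $\phi=\psi^{n}\circ\phi'$, where $\phi'$ is the homomorphism attached to $\mathcal{M}$ and $\psi^{n}$ is the $n$-fold power map of Notation \ref{3.9}; equivalently, $c=B\psi^{n}\circ c'$ for the classifying map $c'$ of $\mathcal{M}$. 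Hence $\mathrm{Im}(\phi)\subseteq \mathrm{Im}(\psi^{n})=\mathbb{G}_m^{n}$.

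Finally I would pass to quotients: the inclusion $\mathrm{Im}(\phi)\subseteq \mathbb{G}_m^{n}$ induces a surjection $\pi_0^{\A^1}(Y)\simeq \mathbb{G}_m/\mathrm{Im}(\phi)\twoheadrightarrow \mathbb{G}_m/\mathbb{G}_m^{n}$, and by Lemma \ref{3.7} the target is $\pi_0^{\A^1}(B\mu_n)$, which is a nontrivial sheaf for $n\geq 2$ (it is the very sheaf governing the non-$\A^1$-connectedness in Theorem \ref{main2}). A surjection of pointed sheaves onto a nontrivial target forces the source to be nontrivial, so $\pi_0^{\A^1}(Y)\neq\ast$ and $Y$ is not $\A^1$-connected. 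The main obstacle I anticipate is not the computation itself but ensuring that the two inputs borrowed from Proposition \ref{3.6}—the $\A^1$-covering long exact sequence and the representability isomorphism $\Pic\simeq \Hom_{Gr}(\pi_1^{\A^1}(-),\mathbb{G}_m)$—apply verbatim when $X$ is a stack rather than a scheme, and that the factorization $\phi=\psi^{n}\circ\phi'$ genuinely holds at the level of sheaf homomorphisms (so that the containment $\mathrm{Im}(\phi)\subseteq \mathbb{G}_m^{n}$, and not merely a field-by-field statement, is available).
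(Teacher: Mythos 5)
Your proof is correct and follows essentially the same route as the paper's: both use the $\A^1$-fibre sequence $\mathbb{G}_m \to Y \to X$ together with $\A^1$-connectedness of $X$ to identify $\pi_0^{\A^1}(Y)$ with the cokernel of the map $\pi_1^{\A^1}(X)\to\mathbb{G}_m$ classifying $\mathcal{L}$, and both factor that map as $\psi^n$ composed with the map classifying $\mathcal{M}$. The only cosmetic difference is that the paper concludes directly from the non-surjectivity of $\psi^n$ as a map of Nisnevich sheaves, whereas you pass to the explicit quotient $\mathbb{G}_m/\mathbb{G}_m^n\simeq\pi_0^{\A^1}(B\mu_n)$ and cite its nontriviality; these are the same observation.
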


\begin{proof}
	Consider the $\A^1$-fibre sequence
	\[\mathbb{G}_m\rightarrow Y\rightarrow X\]
	As $X$ is $\mathbb{A}^1$-connected, the associated long exact sequence has the form
	\[\ldots \rightarrow\pi_1^{\A^1} (X)\overset{\@L}{\rightarrow} \mathbb{G}_m\rightarrow \pi_0^{\A^1} (Y)\rightarrow 1\]
	
	Suppose that $\@L$ admits an $n$-th root so that $\@L=\@M^{\otimes n}$ for some $\@M\in \Pic(X)$. Then the map $\@L: \pi_1^{\A^1}(X)\rightarrow \mathbb{G}_m$ factors as
	\[\pi_1^{\A^1}(X)\overset{\@M}{\rightarrow} \mathbb{G}_m\overset{\psi^n}{\rightarrow} \mathbb{G}_m\]
	As $\psi^n$ is not surjective, we see that $\@L$ is not surjective. Hence, by the long exact sequence $\pi_0^{\A^1}(Y)$ cannot be trivial.
	
\end{proof}


\begin{theorem}\label{theorem-mun-gerbes-Pn}
	Let $\P^d\sqrt[n]{\sO(k)}$, $0 < k <n$ be a non trivial $\mu_n$-gerbe over $\P^d$. Then
	\begin{enumerate}
		\item $\P^d\sqrt[n]{\sO(k)}$ is $\A^1$-connected if $k$ and $n$ are coprime.
		\item $\pi_{0}^{\A^1}(\P^d\sqrt[n]{\sO(k)}) \simeq \pi_{0}^{\A^1}(B\mu_l)$, where $l = gcd(k,n) >1$
	\end{enumerate} 
\end{theorem}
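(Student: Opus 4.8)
The plan is to realise $\P^d\sqrt[n]{\sO(k)}$ as the total space of the $\A^1$-fibration supplied by Lemma \ref{3.7} and to read off $\pi_0^{\A^1}$ from the associated long exact sequence once the connecting map has been identified. Write $\sY := \P^d\sqrt[n]{\sO(k)}$. Being a $\mu_n$-root gerbe, $\sY$ fits by Lemma \ref{3.7} into a fiber sequence $B_{\acute{e}t}\mu_n \to \sY \to \P^d$, obtained by base change of the universal sequence $B_{\acute{e}t}\mu_n \to B\mathbb{G}_m \xrightarrow{\psi^n} B\mathbb{G}_m$ along the classifying map of $\sO(k)$. Since $\P^d$ is $\A^1$-connected, the Morel long exact sequence collapses to $\pi_1^{\A^1}(\P^d) \xrightarrow{\partial} \pi_0^{\A^1}(B\mu_n) \to \pi_0^{\A^1}(\sY) \to \ast$. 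Using that $\pi_0^{\A^1}(B\mu_n)$ is a sheaf of abelian groups (Lemma \ref{3.7}), the $\pi_1^{\A^1}(\P^d)$-action on it is translation through $\partial$, so exactly as in Proposition \ref{3.6} I would conclude $\pi_0^{\A^1}(\sY) = \coker(\partial)$.

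The key step is to identify $\partial$. By naturality of the connecting map under the pullback square defining the root gerbe, $\partial$ factors as $\pi_1^{\A^1}(\P^d) \xrightarrow{(\sO(k))_\ast} \pi_1^{\A^1}(B\mathbb{G}_m)=\mathbb{G}_m \xrightarrow{\rho_n} \pi_0^{\A^1}(B\mu_n)$, where $\rho_n$ is the universal connecting map of Lemma \ref{3.7}. Since $\sO(k)=\sO(1)^{\otimes k}$, its classifying map is $\psi^k$ post-composed with that of $\sO(1)$, whence $(\sO(k))_\ast = \psi^k\circ\theta$ with $\theta$ as in Notation \ref{3.9}. Therefore $\partial = \rho_n\circ\psi^k\circ\theta$.

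It then remains to compute the cokernel. Since $\theta$ is surjective, $\im\,\partial = \rho_n(\mathbb{G}_m^k)$; and since $\rho_n$ is surjective with kernel $\mathbb{G}_m^n$ (Lemma \ref{3.7}), one gets $\pi_0^{\A^1}(\sY) = \mathbb{G}_m/(\mathbb{G}_m^k\cdot\mathbb{G}_m^n)$. The final ingredient is the elementary identity $\mathbb{G}_m^k\cdot\mathbb{G}_m^n = \mathbb{G}_m^{\gcd(k,n)}$ of subsheaves of $\mathbb{G}_m$: over any field it follows by writing $\gcd(k,n)=ak+bn$, so that every $\gcd(k,n)$-th power is a product of a $k$-th and an $n$-th power, together with the reverse inclusions $\mathbb{G}_m^k,\mathbb{G}_m^n\subseteq\mathbb{G}_m^{\gcd(k,n)}$. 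Hence $\pi_0^{\A^1}(\sY) \cong \mathbb{G}_m/\mathbb{G}_m^{l} \cong \pi_0^{\A^1}(B\mu_l)$ with $l=\gcd(k,n)$: for coprime $k,n$ we have $l=1$ and the right-hand side is trivial, giving (1), while $l>1$ gives (2).

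I expect the main obstacle to be the second step, namely making the naturality of the connecting homomorphism precise. Concretely, one must verify that the fiber sequence for $\sY$ is genuinely the pullback of the universal $\mu_n$-sequence (this is the last assertion of Lemma \ref{3.7}, combined with stability of fiber sequences under base change), that the connecting map is natural for such pullbacks, and that at the $\pi_0$-level, where the sequence is only exact as pointed sheaves of sets, the image of $\partial$ is truly a subgroup by which one may quotient. The abelian-group structure on $\pi_0^{\A^1}(B\mu_n)$ coming from Lemma \ref{3.7} is what makes this last point work, in parallel with the argument already used in Proposition \ref{3.6}.
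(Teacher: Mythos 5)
Your proposal follows essentially the same route as the paper's proof: base-change the universal sequence $B\mu_n \to B\mathbb{G}_m \xrightarrow{\psi^n} B\mathbb{G}_m$ along the classifying map of $\sO(k)$, run the long exact sequence of the $\A^1$-fibration $B\mu_n \to \P^d\sqrt[n]{\sO(k)} \to \P^d$, identify the connecting map as $\rho_n\circ\psi^k\circ\theta$ by naturality of the cartesian square, and compute the cokernel using surjectivity of $\theta$ and $\rho_n$. The one place you improve on the paper is the final algebraic step: the paper writes the identity as an intersection, $\mathbb{G}_m^k \cap \mathbb{G}_m^n = \mathbb{G}_m^l$, whereas your Bezout argument giving $\mathbb{G}_m^k\cdot\mathbb{G}_m^n = \mathbb{G}_m^{\gcd(k,n)}$ is the correct formulation of what is needed, and it leads to the same conclusion $\pi_0^{\A^1}(\P^d\sqrt[n]{\sO(k)}) \simeq \mathbb{G}_m/\mathbb{G}_m^{l} \simeq \pi_0^{\A^1}(B\mu_l)$.
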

\begin{proof}
We have the following cartesian square 
$$\xymatrix{
	B\mu_n \ar[r]^{id} \ar[d] &	B\mu_n \ar[d]\\
	\P^d\sqrt[n]{\sO(k)} \ar[r] \ar[d] &  B\mathbb{G}_m\ar[d] \\
	\P^d\ar[r]^{\sO(k)} &  B\mathbb{G}_m
}$$ where the lower vertical maps are $\A^1$-fibrations. So we have a long exact sequence induced by the $\A^1$-fibration $ B\mu_n \to 	\P^d\sqrt[n]{\sO(k)}  \to \P^{d}$.   $$ 1 \to\mu_n \to \pi_1^{\A^1}(\P^d\sqrt[n]{\sO(k)} ) \to \pi_1^{\A^1}(\P^d)\to \pi_0^{\A^1}(B\mu_n) \to \pi_0^{\A^1}(\P^d\sqrt[n]{\sO(k)})\to 1$$ So we have a surjective map $\pi_0^{\A^1}(B\mu_n) \to \pi_0^{\A^1}(\P^d\sqrt[n]{\sO(k)})$ and by exactness it suffices to understand image of the morphism $\pi_1^{\A^1}(\P^d)\to \pi_0^{\A^1}(B\mu_n)$. We have the following commutative square induced by the cartesian diagram above  $$\xymatrix{
\pi_1^{\A^1}(\P^d)\ar[r] \ar[d] &  \mathbb{G}_m \ar[d]^{\rho_{n}} \\
\pi_0^{\A^1}(B\mu_n) \ar[r]^{id}&  \pi_0^{\A^1}(B\mu_n) 
}$$ where the upper horizontal map has image $\mathbb{G}_m^k
$ as $\pi_1^{\A^1}(\P^d) \to \mathbb{G}_m$ is induced by $\P^d \xrightarrow{\sO(k)} B\mathbb{G}_m$. (See the last paragraph of proof of Proposition \ref{3.6}). $ \rho_{n}: \mathbb{G}_m \to \pi_0^{\A^1}(B\mu_n)$ is the surjective map induced by the fibration $B\mu_n \to B\mathbb{G}_m\to B\mathbb{G}_m$ (Corollary \ref{3.7}, Notation \ref{3.9}). Hence by exactness we conclude $\pi_0^{\A^1}(\P^d\sqrt[n]{\sO(k)}) \simeq \pi_0^{\A^1}(B\mu_n)/\rho_{n}(\mathbb{G}_m^k)$. Finally we observe that $\mathbb{G}_m^k \cap \mathbb{G}_m^n = \mathbb{G}_m^l$, where $l = gcd(k,n)$. Therefore $\rho_{n}(\mathbb{G}_m^k) \simeq (\mathbb{G}_m^k)/(\mathbb{G}_m^l)$, so we have  $\pi_0^{\A^1}(\P^d\sqrt[n]{\sO(k)}) \simeq \pi_0^{\A^1}(B\mu_n)/\rho_{n}(\mathbb{G}_m^k) \simeq \mathbb{G}_m/(\mathbb{G}_m^l) \simeq \pi_{0}^{\A^1}(B\mu_l)$. In case $k$ and $n$ are coprime the $\A^1$-connectedness of the gerbe is also clear.
\end{proof}
\begin{lemma}
	The stack $\P(m,n)$ is $\A^1$-connected.
\end{lemma}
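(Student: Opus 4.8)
The plan is to read off $\A^1$-connectedness directly from the presentation $\P(m,n)=[(\A^2\setminus 0)/\mathbb{G}_m]$, where $\mathbb{G}_m$ acts by $\lambda.(x,y)=(\lambda^m x,\lambda^n y)$, rather than through the pushout of Theorem \ref{main1}. Indeed, when $\gcd(m,n)>1$ the generic stabiliser is $\mu_{\gcd(m,n)}\neq e$, so Theorem \ref{main1} does not apply and the computation of Lemma \ref{onestackypoint} is unavailable; the atlas argument below instead works uniformly in $m,n$. The one external input is Morel's computation that $\A^2\setminus 0$ is $\A^1$-connected, i.e. that $\pi_0^{\A^1}(\A^2\setminus 0)$ is the terminal sheaf \cite{Morel-book}. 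Writing $q\colon \A^2\setminus 0\to \P(m,n)$ for the atlas, the whole proof reduces to the claim that, on sections over every Nisnevich point, the induced map $\pi_0^{\A^1}(q)$ is surjective.

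I would verify this claim over an arbitrary Henselian local ring $R$ (a Nisnevich point). Given a class $\bar p\in\pi_0^{\A^1}(\P(m,n))(R)$, first lift it to $\pi_0(\P(m,n))(R)$: the sheaf map $\pi_0(\P(m,n))\surj\pi_0^{\A^1}(\P(m,n))$ is an epimorphism \cite[Corollary 3.22]{mv}, and since $R$ is a Nisnevich point every Nisnevich cover of $\Spec R$ splits, so the epimorphism is surjective on $R$-sections. Such a lift $p$ is a $\mathbb{G}_m$-torsor on $\Spec R$ with an equivariant map to $\A^2\setminus 0$; as $R$ is local we have $H^1_{\acute{e}t}(R,\mathbb{G}_m)=\Pic(R)=0$, so this torsor is trivial and Lemma \ref{lift} produces a lift $\tilde p\in(\A^2\setminus 0)(R)$. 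By functoriality of $\pi_0^{\A^1}$ the class of $\tilde p$ in $\pi_0^{\A^1}(\A^2\setminus 0)(R)$ maps to $\bar p$, which proves surjectivity of $\pi_0^{\A^1}(q)$ on $R$-sections.

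Since $\pi_0^{\A^1}(\A^2\setminus 0)(R)=\ast$, surjectivity of $\pi_0^{\A^1}(q)$ forces $\pi_0^{\A^1}(\P(m,n))(R)$ to be a single point for every Henselian local $R$; as a Nisnevich sheaf is determined by its stalks, the sheaf $\pi_0^{\A^1}(\P(m,n))$ is terminal, i.e. $\P(m,n)$ is $\A^1$-connected. (Alternatively one may run the identical argument over all fields $L$ and then invoke the injection $\pi_0^{\A^1}(\P(m,n))(R)\inj\pi_0^{\A^1}(\P(m,n))(K)$ of \cite[Corollary 4.17]{c} for $R$ Henselian local with fraction field $K$, exactly as in the proof of Theorem \ref{3.5}.) The only point requiring care is the lifting step: one must be sure that an $\A^1$-homotopy class downstairs is genuinely represented by a point of the atlas, and this is where triviality of $\mathbb{G}_m$-torsors over local rings---not any coprimality of $m$ and $n$---does the work, so the argument simultaneously reproves Lemma \ref{onestackypoint}. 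I expect this bookkeeping (surjectivity of $\pi_0\surj\pi_0^{\A^1}$ on Nisnevich points together with the torsor triviality) to be the only real obstacle; the geometric heart, $\A^1$-connectedness of $\A^2\setminus 0$, is already known.
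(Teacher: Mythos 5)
Your proof is correct, and it follows a genuinely different route from the paper's. The paper splits into two cases: for $\gcd(m,n)=1$ it quotes Lemma \ref{onestackypoint}, which rests on the pushout formula of Theorem \ref{main1}, while for $d:=\gcd(m,n)>1$ it exhibits $\P(m,n)$ as the pullback of $\psi^d\colon B\mathbb{G}_m\to B\mathbb{G}_m$ along the classifying map of the torsor $\A^2\setminus 0\to \P(m/d,n/d)$, and then chases the long exact sequence of the resulting fibration $B\mu_d\to \P(m,n)\to \P(m/d,n/d)$ (Lemma \ref{3.7}), the key input being surjectivity of $\pi_1^{\A^1}(\P(m/d,n/d))\to \pi_0^{\A^1}(B\mu_d)$ --- which is ultimately powered by the same fact you use, namely $\A^1$-connectedness of $\A^2\setminus 0$. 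You instead exploit that $\mathbb{G}_m$ is special: \'{e}tale $\mathbb{G}_m$-torsors over local rings are trivial, so every stalk of $\pi_0(\P(m,n))$ is hit from the atlas (Lemma \ref{lift}), and the epimorphism $\pi_0\surj \pi_0^{\A^1}$ of \cite[Corollary 3.22]{mv} then forces every stalk of $\pi_0^{\A^1}(\P(m,n))$ to be a singleton. In effect this is the surjectivity half of the paper's proof of Theorem \ref{3.5}, run in a situation where the branch through the stacky locus never occurs, and it correctly sidesteps the fact that Theorem \ref{main1} itself is unavailable when $d>1$ (non-trivial generic stabilisers). Your argument is shorter, uniform in $m,n$, subsumes Lemma \ref{onestackypoint} and Corollary \ref{theorem-elliptic-pi0}, and actually proves the general statement that $[X/G]$ is $\A^1$-connected whenever $X$ is $\A^1$-connected and $G$-torsors over henselian local rings are trivial. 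What the paper's heavier fibration argument buys in exchange is the rest of the long exact sequence, i.e.\ control of $\pi_1^{\A^1}(\P(m,n))$ in the style of Proposition \ref{3.6} and Theorem \ref{theorem-mun-gerbes-Pn}, which your stalkwise argument cannot see. Two points of care, both at the paper's own level of rigour: ``sections over a Nisnevich point'' must systematically be read as stalks (filtered colimits over Nisnevich neighbourhoods), and identifying such a stalk of $\pi_0(\P(m,n))$ with isomorphism classes of $\mathbb{G}_m$-torsors over $\Spec R$ equipped with equivariant maps to $\A^2\setminus 0$ requires the standard spreading-out step for finitely presented objects; the paper makes exactly these moves in the surjectivity part of the proof of Theorem \ref{3.5}.
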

\begin{proof}
	In case $m$ and $n$ are coprime we use Lemma \ref{onestackypoint}, so assume $1< d:=gcd(m,n)$. Then $\P(m,n)$ is a $\mu_d$-gerbe over the $\A^1$-connected stacky curve $\P(m/d,n/d)$. We claim $\P(m,n)$ is the base change of $$\xymatrix{
		\P(m,n) \ar[d]^{p}\ar[r] &  B\mathbb{G}_m \ar[d]^{\psi^d} \\
		\P(m/d, n/d) \ar[r]^{\@L} &  B\mathbb{G}_m 
	}$$ where $\@L$ is the $\mathbb{G}_m$-torsor $\A^2 \setminus 0$ with the action $\lambda(x,y) = (\lambda^{m/d}.x, \lambda^{n/d}.y)$. The morphism $p$ is induced by the map $\A^2\setminus0 \to \A^2\setminus0$ which sends $(x,y) \neq 0$ to $(x^{d}, y^{d})$. Under this map $p^{\ast}(\@L) \simeq (\A^2\setminus0 \times \mathbb{G}_m)/\mathbb{G}_m$ with action $\lambda.(x,y,z) = (\lambda^m.x,\lambda^n.y,\lambda^d.z)$. By Definition \ref{redstr}, $p^{\ast}(\@L) $ is the $d^{th}$ power of the torsor $\A^2\setminus \to \P(m,n)$. Let $\sX$ be the stack which is the fibered product of the above diagram. By the description of fibered products $\sX$ is a functor whose sections on a scheme $Y$ consists of $\{\alpha: Y \to \P(m/d,n/d), \@M \in \Pic(Y), \beta: \@M^{\otimes d} \simeq \alpha^{\ast}\@L\}$ where as a morphism $ Y \to \P(m,n)$ consists of $\{\beta: T \to \A^2\setminus 0  \} $, where $T \to Y$ is $\mathbb{G}_m$-torsor and $\beta$ is an equivariant map to $\A^2\setminus 0$ with action $\lambda.(x,y) = (\lambda^m.x, \lambda^n.y)$. These two data are equivalent as given $Y \to \sX$ we set $\@M$ as $T$ and since $\@M^{\otimes d} \simeq \alpha^{\ast}\@L$ we also get a  map $T \to \A^2\setminus 0 $ satisfying the required equivariance condition. Conversely given $\gamma :Y \to \P(m,n)$ we compose with $p: \P(m,n) \to \P(m/d,n/d)$ to get $\alpha$. Set $\@M$ as the pullback of $\A^2 \setminus 0 \to \P(m,n)$ under $p$. Since $\A^2 \setminus 0 \to \P(m,n)$ is $d^{th}$ power of $p^{\ast}(\@L)$ in $\Pic(\P(m,n))$, we obtain $\@M^{\otimes d} \simeq \alpha^{\ast}\@L$. 
	So by Lemma \ref{3.7}, we have an exact sequence $$ \cdots \to  \pi_1^{\A^1}(\P(m/d,n/d)) \to \pi_0^{\A^1}(B\mu_d) \to \pi_0^{\A^1}(\P(m,n)) \to 1$$ The theorem will follow from the next claim.
	\begin{claim}
		$\pi_0^{\A^1}(B\mu_d) \to \pi_0^{\A^1}(\P(m,n)) $ is the zero map.
	\end{claim}
	By exactness it suffices to prove $\pi_1^{\A^1}(\P(m/d,n/d)) \to \pi_0^{\A^1}(B\mu_d)$ is surjective.  $\pi_1^{\A^1}(\P(m/d,n/d)) \to \pi_0^{\A^1}(B\mu_d)$ factors as $\pi_1^{\A^1}(\P(m/d,n/d)) \xrightarrow{h} \mathbb{G}_m \xrightarrow{\rho_d} \pi_0^{\A^1}(B\mu_d)$, where $h$ is induced by $\@L: \P(m/d,n/d) \to B\mathbb{G}_m$. By the dicussion above $\@L$ is given by $\A^2\setminus0$ which is $\A^1$-connected and so $h$ is surjective. The morphism $\rho_d$ is already known to be surjective so   $\pi_1^{\A^1}(\P(m/d,n/d)) \to \pi_0^{\A^1}(B\mu_d)$ is surjective and we are done. 
\end{proof}
\begin{corollary}\label{theorem-elliptic-pi0}
The moduli stack of elliptic curves over a field of characteristic $\neq 2,3$-denoted $\overline{\@M}_{1,1}$ is $\A^1$-connected.
\end{corollary}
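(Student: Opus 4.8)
The plan is to reduce the corollary to the $\A^1$-connectedness of a weighted projective stack, which the preceding lemma already supplies. The essential input is the classical quotient presentation of $\overline{\@M}_{1,1}$. Over a field $k$ with $\Char k \neq 2,3$, every generalised elliptic curve can be written in short Weierstrass form $y^2 = x^3 + ax + b$, and the residual coordinate changes $x \mapsto \lambda^2 x$, $y \mapsto \lambda^3 y$, $\lambda \in \mathbb{G}_m$, act on the coefficient pair $(a,b)$ with weights $(4,6)$. The stable locus --- smooth elliptic curves together with the nodal cubic, excluding only the cuspidal cubic $y^2 = x^3$ --- corresponds exactly to $(a,b) \neq 0$. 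Hence one has an isomorphism of stacks
\[ \overline{\@M}_{1,1} \simeq [(\A^2 \setminus 0)/\mathbb{G}_m] = \P(4,6), \qquad \lambda.(a,b) = (\lambda^4 a, \lambda^6 b). \]
I would begin by recording this identification and checking its consistency: the generic $\mu_2$-stabiliser of $\P(4,6)$, coming from $\lambda = -1$, matches the universal automorphism $[-1]$ present on every elliptic curve, confirming that no gerbe twist has been dropped.

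Granting this presentation, the conclusion is immediate. Since $\gcd(4,6) = 2 > 1$, the stack $\P(4,6)$ falls squarely under the preceding lemma, whose proof treats the non-coprime case by exhibiting $\P(m,n)$ as a $\mu_d$-gerbe over $\P(m/d,n/d)$ with $d = \gcd(m,n)$ and then running the fibre-sequence argument built from Lemma \ref{3.7}. Applying it with $(m,n) = (4,6)$ yields $\pi_0^{\A^1}(\overline{\@M}_{1,1}) \simeq \pi_0^{\A^1}(\P(4,6))$, which is trivial; thus $\overline{\@M}_{1,1}$ is $\A^1$-connected.

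The main obstacle is therefore not homotopical --- that part is entirely subsumed by the preceding lemma --- but the geometric identification $\overline{\@M}_{1,1} \simeq \P(4,6)$. This is exactly where the hypothesis $\Char k \neq 2,3$ is used: it guarantees that $6$ is invertible, so that completing the square (using $2$) and completing the cube (using $3$) normalise an arbitrary Weierstrass equation to the short form $y^2 = x^3 + ax + b$; in characteristics $2$ and $3$ this normalisation fails and the stack is genuinely different. The only point requiring care beyond invoking the standard presentation is that the compactification removes precisely the origin and nothing more, which follows from the dichotomy of singular Weierstrass cubics into nodal ($\Delta = 0$, $(a,b) \neq 0$) and cuspidal ($(a,b) = 0$) together with the fact that only nodal degenerations are stable.
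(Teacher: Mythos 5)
Your proposal is correct and follows exactly the paper's route: the paper's proof of this corollary is precisely the identification $\overline{\@M}_{1,1} \simeq \P(4,6)$ combined with the preceding lemma that $\P(m,n)$ is $\A^1$-connected. The extra detail you provide on the Weierstrass presentation (and on where $\Char k \neq 2,3$ enters) is a justification the paper leaves implicit, but it is the same argument.
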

\begin{proof} This follows from the previous lemma and the isomorhism $\overline{\@M}_{1,1} \simeq \P(4,6)$.
\end{proof}
	Following corollary is an immediate consequence of \cite[Proposition 4.3.8]{amorel}. The assumption of a scheme in the proposition in op. cit is superfluous. It holds for any $\A^1$-connected space.
	
\begin{corollary}\label{corollary-brauer-M11}
We have the following isomorphisms for the Brauer group of $\overline{\@M}_{1,1}$ over a field $k$ \begin{enumerate}
		\item  $\mathrm{Br}(\overline{\@M}_{1,1}) \simeq  \mathrm{Br}(k)$  if char($k$) $=0$
		\item $\mathrm{Br}(\overline{\@M}_{1,1})[\frac{1}{p}] \simeq  \mathrm{Br}(k)$ if  char($k$) $ >3$
\end{enumerate}
\end{corollary}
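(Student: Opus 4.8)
The plan is to obtain this as a formal consequence of the $\A^1$-connectedness of $\overline{\@M}_{1,1}$ proved in Corollary \ref{theorem-elliptic-pi0}, together with \cite[Proposition 4.3.8]{amorel}; the only genuine content is to check, as asserted just above the statement, that the schematic hypothesis in loc. cit. plays no role. First I would recall the shape of the cited result for schemes: for a smooth proper $\A^1$-connected $k$-scheme $X$ the pullback $\mathrm{Br}(k) \to \mathrm{Br}(X)$ is an isomorphism when $\Char k = 0$, and an isomorphism after inverting $p$ when $\Char k = p > 0$ (compare also \cite[Proposition 4.1.1]{amorel} for the characteristic-zero statement quoted in the introduction). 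The base point of $\overline{\@M}_{1,1}$ furnishes a section $\overline{\@M}_{1,1} \to \Spec k$, hence a splitting of $\mathrm{Br}(k) \to \mathrm{Br}(\overline{\@M}_{1,1})$, so in every case it suffices to show that the relative prime-to-$p$ Brauer group vanishes.

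Second I would isolate the mechanism behind the cited proposition and verify it survives passage to the stack. For $n$ invertible in $k$ the Zariski-sheafified étale cohomology $\mathcal{H}^2(\mu_n)$ is strictly $\A^1$-invariant, and since Hilbert $90$ kills $\mathcal{H}^1(\mathbb{G}_m)$ the Kummer sequence identifies $\mathcal{H}^2(\mu_n)$ with the $n$-torsion of $\mathcal{H}^2(\mathbb{G}_m)$; for a smooth proper space the prime-to-$p$ Brauer group is recovered as the global sections of the resulting unramified sheaf. A strictly $\A^1$-invariant sheaf is unramified and takes the same value on a smooth proper $\A^1$-connected object as on $\Spec k$, so one gets $\mathrm{Br}(X)[\tfrac1p] \simeq \mathrm{Br}(k)[\tfrac1p]$, with no distinction between the two when $p$ is invertible or absent. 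None of these inputs uses that the base is a scheme: $\overline{\@M}_{1,1}$ is a smooth, proper, tame Deligne--Mumford stack (and, via the nerve together with the adjunction \eqref{equation-MV-adjunction} of Section \ref{subsection-stacks-basics}, an object of $\sH(k)$), it is $\A^1$-connected, and the relevant cohomological invariants are defined for such stacks. Thus the argument of \cite[Proposition 4.3.8]{amorel} applies verbatim; specialising to $\Char k = 0$ gives (1) and to $\Char k = p > 3$ gives (2).

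The step I expect to be the main obstacle is twofold, and it is exactly what forces the shape of the statement. On one hand, the homotopy invariance and rigidity underlying the representability of the coefficient theory hold only for torsion prime to the characteristic; the $p$-primary part of the Brauer group is genuinely not $\A^1$-invariant in characteristic $p$, which is why part (2) must invert $p$ rather than assert a full isomorphism. On the other hand, one must check that the identification of $\mathrm{Br}$ with the torsion of the cohomological Brauer group, together with the purity and unramifiedness inputs, remains valid for the tame stack $\overline{\@M}_{1,1}$ rather than for a scheme; here tameness, and the fact (Theorem \ref{theorem-mun-gerbes-Pn} and the surrounding computations) that $\overline{\@M}_{1,1} \simeq \P(4,6)$ is an orbifold with coarse space $\P^1$, make the reduction to the schematic situation available, since the $\mu_n$-gerbe analysis already controls the étale cohomology in the relevant degrees. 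Finally, the condition $\Char k > 3$ in (2) is not an artefact of the Brauer computation but is inherited from Corollary \ref{theorem-elliptic-pi0}, where $\Char k \neq 2,3$ is needed both for the isomorphism $\overline{\@M}_{1,1} \simeq \P(4,6)$ and for tameness.
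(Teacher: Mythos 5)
Your proposal is correct and takes essentially the same route as the paper: the paper's proof is exactly the observation that the statement is an immediate consequence of \cite[Proposition 4.3.8]{amorel} applied to the $\A^1$-connectedness established in Corollary \ref{theorem-elliptic-pi0}, with the remark that the schematic hypothesis in op.\ cit.\ is superfluous and the argument holds for any $\A^1$-connected space. Your unpacking of the mechanism (Kummer sequence, strict $\A^1$-invariance, the origin of the $\Char k > 3$ and $[\frac{1}{p}]$ caveats) is additional detail the paper leaves implicit, but the logical structure is identical.
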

	
	\begin{theorem}\label{4.15}
		The only $\A^1$-connected stacky curves over $\mathbb{C}$ are
		\begin{enumerate}
			\item \label{co1} The projective line $\P^1$.
			\item \label{co2} The stacky curves $\P(m,n)$.
		\end{enumerate}
	\end{theorem}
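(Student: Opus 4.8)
The plan is to fix the coarse space first and then analyse the stacky structure, treating the orbifold (trivial generic stabiliser) and the gerbe cases in turn. By the lemma above showing that the coarse space of an $\A^1$-connected stacky curve is itself $\A^1$-connected, if $\sX$ is $\A^1$-connected then its coarse space is an $\A^1$-connected smooth proper curve over $\mathbb{C}$; as recalled in the introduction the only such curve is $\P^1$. By the structure theory of tame stacky curves (\cite{gers}) together with the Remark that a stacky curve with trivial generic stabiliser is determined by its coarse space and ramification divisor, $\sX$ is a gerbe over an orbifold $\sX^{\mathrm{rig}}$ with coarse space $\P^1$, where $\sX^{\mathrm{rig}}$ is the root stack of $\P^1$ at finitely many points $p_1,\dots,p_r$ with cyclic stabilisers $\mu_{n_1},\dots,\mu_{n_r}$; cyclicity holds because each stabiliser acts faithfully on the one-dimensional tangent space of the coarse space and so embeds in $\mathbb{G}_m$.

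First I would settle the orbifold case $\sX=\sX^{\mathrm{rig}}$, for which the claim is that $\sX$ is $\A^1$-connected exactly when $r\le 1$, or $r=2$ with $\gcd(n_1,n_2)=1$. In these cases the Remark identifies $\sX$ with $\P(n_1,n_2)$ (reading $n_i=1$ for an absent point), which is $\A^1$-connected by Lemma \ref{onestackypoint}. For the converse I would invoke that an $\A^1$-connected space over a field of characteristic $0$ has trivial \'{e}tale fundamental group, the statement of \cite[Proposition 4.1.2]{amorel}, whose extension from schemes to stacks is of the same nature as the extension used in Corollary \ref{corollary-brauer-M11}. Since $\pi_1^{\acute{e}t}(\sX^{\mathrm{rig}})$ is the profinite completion of the orbifold group $\langle \gamma_1,\dots,\gamma_r \mid \gamma_i^{n_i}=1,\ \prod_i\gamma_i=1\rangle$, which equals $\mathbb{Z}/\gcd(n_1,n_2)$ for $r=2$ and is a nontrivial group (finite in the spherical cases, an infinite Fuchsian group otherwise) for $r\ge 3$, every remaining orbifold has nontrivial $\pi_1^{\acute{e}t}$ and cannot be $\A^1$-connected. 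As a more self-contained alternative resting only on Theorem \ref{3.5}: in the Euclidean and hyperbolic cases $\sX^{\mathrm{rig}}$ is uniformised by a smooth proper curve of genus $\ge 1$, which is $\A^1$-rigid, so the argument of the hyperelliptic example above shows $\sX$ contains no nonconstant $\A^1$; in the spherical non-football cases one writes $\sX^{\mathrm{rig}}=[\P^1/\Gamma]$ for a finite $\Gamma$ and applies the pushout of Theorem \ref{main1}, whose stacky locus yields $\pi_0^{\A^1}(\sX^{\mathrm{rig}})\simeq \bigvee_i \pi_0^{\A^1}(B\mu_{n_i})$, which is nontrivial.

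Next I would handle the gerbe case, $\sX\to\sX^{\mathrm{rig}}$ with nontrivial generic stabiliser $B$. Because a gerbe is an epimorphism, $\pi_0^{\A^1}(\sX)\to\pi_0^{\A^1}(\sX^{\mathrm{rig}})$ is surjective, so if $\sX$ is $\A^1$-connected then so is $\sX^{\mathrm{rig}}$, hence $\sX^{\mathrm{rig}}\simeq\P(a,b)$ with $\gcd(a,b)=1$ by the previous step. Over $\mathbb{C}$ the function field of $\P^1$ has cohomological dimension $\le 1$, so the generic gerbe is neutral; moreover the fibre sequence $B_{\acute{e}t}B\to\sX\to\sX^{\mathrm{rig}}$ forces $\pi_0^{\A^1}(B_{\acute{e}t}B)$ to be killed by the single connecting map out of $\pi_1^{\A^1}(\P(a,b))$ (an extension of $\mathbb{G}_m$), which is possible only when $B$ is cyclic, say $B=\mu_d$. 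Thus $\sX$ is a $\mu_d$-root gerbe over $\P(a,b)$, and Lemma \ref{3.7} gives the exact sequence $\pi_1^{\A^1}(\P(a,b))\to\pi_0^{\A^1}(B\mu_d)\to\pi_0^{\A^1}(\sX)\to 1$. Exactly as in Theorem \ref{theorem-mun-gerbes-Pn} and in the proof that $\P(m,n)$ is $\A^1$-connected, the stack $\sX$ is $\A^1$-connected iff this connecting map is surjective; tracking the element of $\Pic(\P(a,b))\simeq\mathbb{Z}$ defining the root gerbe, a $\gcd$-computation identical to that of Theorem \ref{theorem-mun-gerbes-Pn} shows this happens precisely for the gerbes realised by the weighted projective stacks $\P(m,n)$ with $\gcd(m,n)=d$. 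Together with the orbifold case this exhausts the $\A^1$-connected stacky curves.

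I expect the gerbe step to be the main obstacle. The two delicate points are (i) proving that $\A^1$-connectedness genuinely forces the band $B$ to be cyclic over $\mathbb{C}$, rather than merely neutral, and (ii) matching, on the nose, the $\A^1$-connected $\mu_d$-root gerbes over a coprime football with the weighted projective stacks $\P(m,n)$---this requires computing the Picard group of the football and redoing the $\gcd$ bookkeeping of Theorem \ref{theorem-mun-gerbes-Pn} over the stacky base $\P(a,b)$ in place of $\P^d$. A lesser point, needed in the orbifold case, is to justify that triviality of the \'{e}tale fundamental group (and, more broadly, the purity-type results of \cite{amorel}) transfers from schemes to stacks.
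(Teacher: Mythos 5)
Your route is genuinely different from the paper's, and much longer. The paper's entire proof is two steps: an $\A^1$-connected space over an algebraically closed field of characteristic $0$ has trivial \'{e}tale fundamental group (\cite[Proposition 4.1.1]{amorel}, extended from schemes to arbitrary $\A^1$-connected spaces as in the remark preceding Corollary \ref{corollary-brauer-M11}), and Behrend--Noohi \cite[Theorem 1.1]{bn} classifies the simply connected stacky curves over $\mathbb{C}$ as precisely $\P^1$ and $\P(m,n)$. Your orbifold step --- coarse space $\P^1$, orbifold group $\langle \gamma_1,\dots,\gamma_r \mid \gamma_i^{n_i}=1,\ \prod_i\gamma_i=1\rangle$ trivial iff $r\leq 1$, or $r=2$ with coprime orders --- is correct, but it amounts to re-deriving the orbifold half of the Behrend--Noohi classification, and it rests on exactly the same schemes-to-stacks extension of Asok--Morel that the paper itself invokes.

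The genuine gap is your gerbe case, and it is larger than your closing caveats suggest. First, you use an $\A^1$-fibre sequence $B_{\acute{e}t}B\to \sX\to \sX^{\mathrm{rig}}$ for an arbitrary finite band $B$, but the paper establishes such a sequence only for $\mu_n$-root gerbes (Lemma \ref{3.7}, via base change along the $n$-th power map of $B\mathbb{G}_m$); Morel's lemma makes \'{e}tale \emph{torsors} into $\A^1$-coverings, and a gerbe is not a torsor under its band, so for a general (possibly non-abelian, possibly non-root) gerbe this fibration statement is itself unproved. Second, the pivotal claim that surjectivity of the connecting map out of $\pi_1^{\A^1}(\P(a,b))$ ``is possible only when $B$ is cyclic'' is asserted with no argument, and it is precisely the hard content: no soft obstruction can dismiss gerbes wholesale, since $\P(m,n)$ with $d=\gcd(m,n)>1$ is itself a nontrivial $\mu_d$-gerbe over $\P(m/d,n/d)$ and \emph{is} $\A^1$-connected. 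Concretely, $\sX=\P^1\sqrt[2]{\sO(1)}\times_{\P^1}\P^1\sqrt[2]{\sO(1)}$ is a stacky curve which is a $(\Z/2)^2$-gerbe over $\P^1$, both of whose factors are $\A^1$-connected by Theorem \ref{theorem-mun-gerbes-Pn}; ruling it out already requires rerunning the connecting-map and Picard-group computation (one finds $\pi_0^{\A^1}(\sX)\simeq \mathbb{G}_m/\mathbb{G}_m^2$), and nothing in your sketch performs this in general, nor the enumeration of all gerbes over $\P(a,b)$ and their identification with the stacks $\P(m,n)$, which you defer as obstacle (ii). As written, then, the proposal proves only the orbifold case. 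The efficient repair is the paper's own move: since you already rely on triviality of $\pi_1^{\acute{e}t}$ for $\A^1$-connected stacks, apply it to $\sX$ directly and quote \cite[Theorem 1.1]{bn}, which is exactly the result that absorbs the gerbe d\'{e}vissage you were attempting by hand.
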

\begin{proof}
	The only thing left to prove is that any $\A^1$-connected stacky curve is isomorphic to $\P(m,n)$ for some $m,n \geq 1$. By \cite[Proposition 4.1.1]{amorel} any $\A^1$-connected space over an algebraically closed field of characteristic $0$ has the trivial $\acute{e}$tale fundamental group. By \cite[Theorem 1.1]{bn} these are precisely (\ref{co1}) and (\ref{co2}).
\end{proof}

\section{Homotopy purity for stacks}

In this section we explain homotopy purity for algebraic stacks. This essentially follows from descent once an appropriate scheme approximation is available in the Morel--Voevodsky category. For more general formulations of the homotopy purity property, including the stable version, see \cite{cda}.

\begin{theorem}\label{theorem-homotopy-purity}
	Let $\sX$ be a smooth algebraic stack over a scheme $S$ and let $\sZ\subset\sX$ be a smooth closed substack.
	Then we have an equivalence,
	\[\sX/(\sX\setminus\sZ)\simeq Th(N_{\sZ}(\sX))\]
	in $\sH(S)$.
\end{theorem}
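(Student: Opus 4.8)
The plan is to reduce the statement to the homotopy purity theorem for smooth schemes (Morel--Voevodsky) by resolving $\sX$ through a smooth atlas and invoking descent, exactly as anticipated in the section heading. First I would choose a smooth representable surjection $X\to\sX$ from a scheme (\'etale, since our stacks are Deligne--Mumford) and form its \v{C}ech nerve $X_\bullet$, the simplicial algebraic space with $X_n=X\times_{\sX}\cdots\times_{\sX}X$ ($n+1$ factors). Passing to the nerve construction of Section \ref{subsection-stacks-basics}, the atlas is an effective epimorphism in the \'etale $\infty$-topos, so \'etale descent identifies $\sX$ with the homotopy colimit $\hocolim_{\Delta^{op}}X_\bullet$ in $\sH_{\acute et}(k)$; I will work there, where descent genuinely holds, and only transport the final equivalence to $\sH(k)$ at the very end via the functor $i_*$ of \eqref{equation-MV-adjunction}, which preserves equivalences.

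Next I would pull the closed substack and its open complement back along the atlas. Setting $Z_\bullet:=X_\bullet\times_{\sX}\sZ$, each $Z_n\inj X_n$ is a smooth closed immersion (smoothness of the atlas forces transversality), and $\sX\setminus\sZ$ is resolved by $X_\bullet\setminus Z_\bullet$. Since the atlas is smooth, hence flat, the conormal sheaf is stable under base change, so for every $n$ we get a canonical identification $N_{Z_n}(X_n)\cong g_n^{\ast}N_{\sZ}(\sX)$, where $g_n\colon Z_n\to\sZ$ is the structure map, and these identifications are natural in the simplicial variable. Applying the scheme-level homotopy purity theorem \cite{mv} degreewise then yields pointed equivalences $X_n/(X_n\setminus Z_n)\simeq Th(N_{Z_n}(X_n))$ that are compatible with all face and degeneracy maps, i.e.\ an equivalence of simplicial objects of $\sH_{\acute et}(k)$.

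Finally I would take homotopy colimits. Because the cofiber $\sX/(\sX\setminus\sZ)$ and the Thom space are both colimit constructions, and homotopy colimits commute, realizing the degreewise equivalence over $\Delta^{op}$ gives
\[\sX/(\sX\setminus\sZ)\simeq \hocolim_{\Delta^{op}}\big(X_\bullet/(X_\bullet\setminus Z_\bullet)\big)\simeq \hocolim_{\Delta^{op}}Th\big(N_{Z_\bullet}(X_\bullet)\big)\simeq Th\big(N_{\sZ}(\sX)\big),\]
the last equivalence using that $\hocolim Z_\bullet\simeq\sZ$ resolves $\sZ$ and that the bundle $N_{\sZ}(\sX)$ is correspondingly resolved by the $N_{Z_n}(X_n)=g_n^{\ast}N_{\sZ}(\sX)$, together with the fact that the Thom space functor, being a relative quotient, commutes with realization. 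Applying $i_*$ transports this to the asserted equivalence in $\sH(k)$.

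The main obstacle I anticipate is the bookkeeping in the last two paragraphs: one must verify that the degreewise purity equivalences are honest natural transformations of simplicial diagrams, not merely objectwise equivalences, so that they descend to realizations, and that the Thom space construction commutes with $\hocolim$ in the pointed category. A secondary technical point is that the terms $X_n$ of the \v{C}ech nerve are a priori algebraic spaces rather than schemes; under our separatedness hypotheses one either arranges the fibre products to remain schemes, or first establishes scheme-level purity for smooth algebraic spaces, which again follows by the same Nisnevich-descent argument from a scheme atlas.
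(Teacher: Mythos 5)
Your overall architecture (\v{C}ech nerve of an atlas, levelwise Morel--Voevodsky purity, commuting the cofibre and Thom-space constructions with the realization) is exactly the paper's, but there is a genuine gap at the descent-and-transport step, and it is precisely the point the paper's proof is built to avoid. You resolve $\sX$ by the \v{C}ech nerve of an \emph{arbitrary} smooth (or \'etale) atlas and invoke \'etale descent, which forces you to work in $\sH_{\acute{e}t}(k)$; you then propose to move the resulting equivalence to $\sH(k)$ by applying $i_*$. The functor $i_*$ of adjunction (\ref{equation-MV-adjunction}) is the \emph{right} adjoint (the inclusion of \'etale-local objects), so while it of course preserves equivalences, it does not commute with colimits: $i_*$ of the cofibre $\sX/(\sX\setminus\sZ)$ computed in $\sH_{\acute{e}t}(k)$ is not the cofibre of $i_*(\sX\setminus\sZ)\to i_*(\sX)$ computed in $\sH(k)$, and likewise for the Thom space and for the realization $\hocolim X_\bullet$. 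The theorem's statement lives in $\sH(k)$, where the stack is embedded via $i_*$ and the quotient and Thom space are Nisnevich-local constructions, so what you obtain after applying $i_*$ is an equivalence between the \emph{wrong} objects (the $i_*$-images of the \'etale-localized quotient and Thom space), not the asserted one. Equivalently: \'etale descent fails in the Nisnevich-local category $\sH(k)$, so for a general atlas $\hocolim X_\bullet\not\simeq i_*(\sX)$ there, and your argument cannot simply be rerun inside $\sH(k)$ either.

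The paper's fix is to choose not an arbitrary atlas but a \emph{smooth-Nisnevich covering} $X\to\sX$ in the sense of \cite{des23}, i.e.\ one admitting Nisnevich-local sections; for such a cover the \v{C}ech nerve $X_\bullet$ is Nisnevich-locally equivalent to $\sX$ along the augmentation, and the same holds after restriction to $\sZ$ and to $\sU=\sX\setminus\sZ$. This lets the entire argument --- levelwise scheme purity $X_i/U_i\simeq Th(N_{Z_i}(X_i))$, the levelwise cofibre computation, and the identification of $\hocolim Th(N_{Z_i}(X_i))$ with $Th(N_{\sZ}(\sX))$ via the base-change square for normal bundles --- take place directly in $\sH(k)$, where Morel--Voevodsky purity applies and where the colimits in the statement are actually computed. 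Your degreewise bookkeeping (smoothness of the atlas giving $Z_n\inj X_n$ smooth closed, $N_{Z_n}(X_n)\cong g_n^*N_{\sZ}(\sX)$, compatibility with faces and degeneracies) is correct and matches the paper; to repair the proof you should replace the \'etale-descent-plus-$i_*$ step by the existence of smooth-Nisnevich atlases and their Nisnevich-local descent property from \cite{des23}.
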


\begin{proof}
	Let $X\rightarrow \sX$ be a smooth-Nisnevich covering. Then the associated \v{C}ech nerve is Nisnevich locally equivalent to $\sX$, i.e, $X_{\bullet}\simeq \sX$ along the augmentation map (see \cite{des23} for details).
	
	Let $\sU:= \sX\setminus \sZ$. Restricting $X_{\bullet}$ to $\sU$ and $\sZ$ produces weak equivalences $U_{\bullet}\rightarrow\sU$ and $Z_{\bullet}\rightarrow\sZ$. 
	
	Consider the cofibre sequence,
	\[\sU\rightarrow\sX\rightarrow \sX/\sU\]
	This sequence is equivalent to the sequence,
	\begin{equation}\label{equation-cofiber-simplicial-purity}
	U_{\bullet}\rightarrow X_{\bullet}\rightarrow X_{\bullet}/U_{\bullet}
	\end{equation}
	Note that the above cofibre can be constructed by taking the cofibre levelwise. Further,
	observe that for each $i$, we have $U_i:= X_i\setminus Z_i$ by construction and that the scheme $Z_i$ is a smooth closed subscheme of $X_i$. Thus, by homotopy purity for schemes, $X_i/U_i\simeq Th(N_{Z_i}(X_i))$. Combining this with the cofibre sequence (\ref{equation-cofiber-simplicial-purity}), we have an equivalence
	\begin{equation}\label{equation-Thom-equals-cofibre}
	\hocolim Th(N_{Z_i}(X_i))\simeq X_{\bullet}/U_{\bullet}.
	\end{equation}
	Thus, it remains to show that $\hocolim Th(N_{Z_i}(X_i))\simeq Th(N_\sZ)(\sX)$. To see this, we begin by unwinding the definitions.
	The Thom space of the normal bundle can be described as the cofibre,
	\[N_\sZ(\sX)\setminus 0\rightarrow N_\sZ(\sX)\rightarrow Th(N_\sZ)(\sX)\]
	
	Futher, as $X\rightarrow\sX$ is smooth, we have a cartesian square,
	\begin{center}
		\begin{tikzcd}
			N_Z(X)\arrow[r]\arrow[d] & X\arrow[d]\\
			N_\sZ(\sX)\arrow[r] & \sX
		\end{tikzcd}
	\end{center}
	where $Z$ is the pullback of $\sZ$ to $X$. As smooth-Nisnevich covers are preserved under pullback, taking the \v{C}ech Nerve gives an equivalence $\lbrace N_{Z_{i}}(X_{i})\rbrace_{\bullet}\simeq N_{\sZ}(\sX)$. Taking Thom spaces objectwise for the \v{C}ech nerve $\lbrace N_{Z_{i}}(X_{i})\rbrace_{\bullet}$ produces the simplicial object $\lbrace Th(N_{Z_i}(X_i))\rbrace_{\bullet}$. This object is equivalent to $Th(N_\sZ)(\sX)$ by construction. Combining this with (\ref{equation-Thom-equals-cofibre}), we get that desired equivalence.
\end{proof}

\begin{corollary}
	Let $\sX$ be $\P^1$ with one stacky point of Example \ref{onestackypoint} defined over an algebraically closed field. Let $\sZ= B\mu_n$ be the residual gerbe at the stacky point. Then $\sX\simeq Th(N_\sZ (\sX))$ in $\sH(k)$.
\end{corollary}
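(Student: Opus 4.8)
The plan is to obtain the equivalence directly from the homotopy purity statement of Theorem~\ref{theorem-homotopy-purity} applied to the pair $(\sX,\sZ)$, and then to identify the resulting quotient $\sX/(\sX\setminus\sZ)$ with $\sX$ itself. Since $\sZ=B\mu_n$ is a smooth closed substack of the smooth stacky curve $\sX$, purity immediately gives
\[\sX/(\sX\setminus\sZ)\simeq Th(N_\sZ(\sX))\]
in $\sH(k)$, so the whole problem reduces to showing that collapsing the open complement $\sU:=\sX\setminus\sZ$ does not change the $\A^1$-homotopy type, i.e.\ that $\sX\to\sX/\sU$ is an equivalence.

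The key geometric step is to describe $\sU$. Writing $\sX\simeq(\A^2\setminus 0)/\mathbb{G}_m$ with the weighted action $t\cdot(x,y)=(tx,t^n y)$, the unique stacky point is the orbit of $(0,y)$, whose stabiliser is $\mu_n$, while every orbit with $x\neq 0$ is free. Thus deleting the residual gerbe $\sZ=B\mu_n$ leaves exactly the free locus $\{x\neq 0\}/\mathbb{G}_m$, and I would check that $(x,y)\mapsto y/x^n$ identifies this quotient with the scheme $\A^1$. In particular $\sU$ is representable and $\A^1$-contractible, so $\sU\simeq\ast$ in $\sH(k)$.

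Finally I would run the formal collapse argument: the pointed quotient $\sX/\sU$ is by definition the homotopy pushout of the span $\ast\leftarrow\sU\to\sX$. Because $\sU\simeq\ast$, the leg $\sU\to\ast$ is an equivalence, and homotopy pushouts preserve equivalences along the parallel leg, so $\sX\to\sX/\sU$ is an equivalence; composing with the purity equivalence yields $\sX\simeq Th(N_\sZ(\sX))$. The only genuinely nonformal input is the identification of $\sX\setminus\sZ$ with $\A^1$, and the main point to be careful about is precisely that removing the residual gerbe produces the \emph{free} locus as an honest open substack which is schematic and $\A^1$-contractible, rather than a stack merely having trivial coarse space; once this is settled the remaining steps are formal consequences of Theorem~\ref{theorem-homotopy-purity} and the stability of homotopy pushouts under equivalence.
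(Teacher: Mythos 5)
Your proposal is correct and follows exactly the paper's own argument: apply Theorem~\ref{theorem-homotopy-purity} to the pair $(\sX,\sZ)$, observe that the complement $\sX\setminus\sZ\simeq\A^1$ is $\A^1$-contractible, and conclude $\sX/(\sX\setminus\sZ)\simeq\sX$. The only difference is that you spell out details the paper leaves implicit, namely the explicit identification of the free locus $\{x\neq 0\}/\mathbb{G}_m$ with $\A^1$ via $(x,y)\mapsto y/x^n$ and the homotopy-pushout justification that collapsing a contractible open substack is an equivalence; both are correct.
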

\begin{proof}
	This immediately follows from homotopy purity (Theorem \ref{theorem-homotopy-purity}). The inclusion $\sZ$ is a smooth closed substack of $\sX$ with complement $\sX\setminus \sZ \simeq \A^1$ which is contractible. Thus, $\sX/(\sX\setminus \sZ)\simeq \sX/pt\simeq \sX$. Now the cofibre sequence,
	\[\sX\setminus \sZ\rightarrow \sX\rightarrow  \sX/(\sX\setminus\sZ)\]
	yields the equivalence $\sX\simeq Th(N_\sZ (\sX))$ using Theorem \ref{theorem-homotopy-purity}.
\end{proof}

\section{computation of motivic cohomology} In this section we analyse the motive of a smooth tame stacky curve with a smooth coarse space. We have the following result 

\begin{proposition} Let $\sX$ be a tame smooth stacky curve with a smooth coarse space $X$ over a
field $k$. Let $\sZ$ denote the stacky locus of $\sX$ and  $\sZ\rightarrow Z$ be the corresponding coarse moduli space. Then we have a cartesian diagram of motives
\end{proposition}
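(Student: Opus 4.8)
The plan is to prove that the coarse space morphism $\sX\to X$, combined with homotopy purity along the stacky locus, produces the homotopy cartesian square
$$\xymatrix{
M(\sX) \ar[r] \ar[d] & Th(N_\sZ(\sX)) \ar[d] \\
M(X) \ar[r] & Th(N_Z(X))
}$$
in the category of motives, whose horizontal arrows are the Gysin (purity) maps and whose vertical arrows are induced by $\sX\to X$ and $\sZ\to Z$. Applying the Thom isomorphism to the rank-one normal bundles $N_\sZ(\sX)$ and $N_Z(X)$ rewrites the right-hand column as $M(\sZ)(1)[2]$ and $M(Z)(1)[2]$, so that the square measures the difference between the motive of $\sX$ and that of its coarse space purely in terms of the stacky locus. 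This is the form I take the intended diagram to have.

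First I would record the two purity cofibre sequences. Theorem \ref{theorem-homotopy-purity} gives $\sX/(\sX\setminus\sZ)\simeq Th(N_\sZ(\sX))$, hence a cofibre sequence $M(\sX\setminus\sZ)\to M(\sX)\to Th(N_\sZ(\sX))$, while homotopy purity for schemes gives the analogous sequence $M(X\setminus Z)\to M(X)\to Th(N_Z(X))$. The key geometric input is that the coarse space morphism restricts to an isomorphism $\sX\setminus\sZ\xrightarrow{\sim} X\setminus Z$ on the representable (non-stacky) locus, so that $M(\sX\setminus\sZ)\simeq M(X\setminus Z)$. The coarse map then furnishes a map of cofibre sequences which is an equivalence on the left-hand (complement) terms. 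In the stable setting of motives, a map of cofibre sequences that is an equivalence on one term makes the square formed by the total and quotient terms homotopy cartesian (equivalently cocartesian), which is exactly the claimed square.

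Next I would invoke the Thom isomorphism. Since $\sX$ is a curve and $\sZ$ is the finite stacky locus, both normal bundles have rank one, so $Th(N_\sZ(\sX))\simeq M(\sZ)(1)[2]$ and $Th(N_Z(X))\simeq M(Z)(1)[2]$; under the root-stack description of $\sX$ near $\sZ$ the line bundle $N_\sZ(\sX)$ is compatibly an $n$-th root of the pullback of $N_Z(X)$, so the right-hand vertical arrow is identified with $M(\sZ)(1)[2]\to M(Z)(1)[2]$ induced by $\sZ\to Z$. On taking motivic cohomology $\Hom_{DM}(-,\Z(1)[2])$, the Mayer--Vietoris sequence attached to this cartesian square recovers the exact sequence computing $\Pic(\sX)$ used in Corollary \ref{5.3}, since $\Pic(B\mu_n)\simeq\Z/n$ and $\Pic$ of a point vanishes.

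The main obstacle is the Thom isomorphism for the residual gerbe $\sZ$: one must know that the motive of the Thom space of a line bundle on a $\mu_n$-gerbe obeys the usual twist formula. This is precisely where tameness is essential—tameness (so that $\mathrm{char}(k)\nmid n$) guarantees that $B\mu_n$ and its normal bundle sit in an orientable situation motivically, whence the projective-bundle formula and the Thom isomorphism hold after descending to the scheme-level \v{C}ech nerve exactly as in the proof of Theorem \ref{theorem-homotopy-purity}. A secondary technical point, routine but worth stating carefully, is upgrading the unstable equivalence of Theorem \ref{theorem-homotopy-purity} to the stable category $DM(k)$ via the stabilisation functor.
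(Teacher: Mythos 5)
Your proof is correct and is essentially the paper's own argument: the paper likewise invokes the Gysin (purity) triangles for $(\sX,\sZ)$ and $(X,Z)$, uses that the coarse map is an isomorphism on the complements so that the third terms $M(U)[1]$ agree, and concludes by the standard triangulated-category fact that a map of triangles which is an equivalence on one corner yields a cartesian square on the remaining corners. The only difference is cosmetic---you defer the Thom identification $Th(N_\sZ(\sX))\simeq M(\sZ)(1)[2]$ to the end, whereas the paper builds it into the Gysin triangle from the start---and your explicit flagging of the Thom isomorphism for line bundles on gerbes and of the passage from the unstable equivalence to $DM(k)$ addresses steps the paper leaves implicit.
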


\begin{center}
	\begin{tikzcd}
	M(\sX)\arrow[r]\arrow[d] & M(\sZ)(1)[2]\arrow[d]\\
	M(X) \arrow[r] & M(Z)(1)[2]
	\end{tikzcd}
\end{center}

\begin{proof} 
Note that as $X$ and $\sX$ have dimension one, $Z$ is a finite set of points and $\sZ$ is a
finite set of gerbes over the points of $Z$. Moreover, the coarse space morphism $\sX\rightarrow X$ is an isomorphism outside the stacky locus. Using the Gysin triangle for motives, we have a
morphism of triangles
\begin{center}
	\begin{tikzcd}
	\arrow[r]& M(\sX)\arrow[r]\arrow[d] & M(\sZ)(1)[2]\arrow[r]\arrow[d] & M(U)[1] \arrow[r]\arrow[d,"="] & \ldots\\
	\arrow[r]& M(X)\arrow[r] & M(Z)(1)[2] \arrow[r] & M(U)[1]\arrow[r] &\ldots
	\end{tikzcd}
\end{center}
where the right vertical map is an isomorphism. A simple diagram chase now yield the
result.
\end{proof}



\begin{remark} Let $\sX$ be a tame smooth stacky curve with smooth coarse space $X$. Let $\sZ$ be
the finite set of stacky points of $\sX$. Then $\sZ = \sqcup_i B\Z/n_i\Z$, and the map $p : \sZ\rightarrow Z$ is the degree
 $1/n_i$ map $p_i : B\Z/n_i\Z \rightarrow \Spec(k)$ on each component. Then applying $\Hom(-,\Z(1)[2])$ gives
us a diagram,
\begin{center}
	\begin{tikzcd}
	\oplus_i \Hom(M(\Spec k)(1)[2],\Z(1)[2])\arrow[r]\arrow[d] & \oplus_i \Hom(M(B\Z/n_i\Z)(1)[2],\Z(1)[2])\arrow[d]\\
	\Hom(M(X),\Z(1)[2]) \arrow[r] & \Hom(M(\sX),\Z(1)[2])
	\end{tikzcd}
\end{center}
Note that by Voevodsky’s cancellation theorem, \[\Hom(M(Y)(1)[2],\Z(1)[2]) \simeq
\Hom(M(Y),\Z) \simeq \Z^{\oplus r}\]
where $r$ is the number of connected components. And, by \cite[Corollary 4.2]{mwv}, \[\Hom(M(Y),\Z(1)[2])\simeq \Pic(Y).\]
Further, the vertical maps are given by sending the generators of $\Z^{\oplus r}$ to the line bundles associated to the connected components. Under these maps the image of the morphism $\oplus_i \Hom(M(Z),\Z)\rightarrow \Pic(\sX)$ is given by a line bundle for which the stabilisers act trivially on the fibres. This implies that the top horizontal arrow is a multiplication $n_i$ on the connected component of $B\Z/n_i\Z$.

\begin{center}
	\begin{tikzcd}
	\Z^{\oplus r}\arrow[r,"{(n_1,\ldots, n_r)}"]\arrow[d] & \Z^{\oplus r}\arrow[d]\\
	\Pic(X) \arrow[r] & \Pic(\sX)
	\end{tikzcd}
\end{center}
This gives an alternative proof of \cite[Theorem 1.1]{lopez}.

\end{remark}

\begin{corollary}\label{5.3}
	Let $\sX$ be a stacky curve with $r$ stacky points having residual gerbes $B\mu_{n_1} \cdots B\mu_{n_r}$, $n_i$ pairwise coprime and $\P^1$ as the coarse space. Then the resulting short exact sequence of Picard groups is $$ 1 \to \Z \xrightarrow{i} \Z \xrightarrow{j} \prod_{i=1}^r \Z/n_i\Z\to 1$$ where $i(1) =m= n_1.n_2\dots n_r$ and $j$ is the product of canonical maps $\Z \to \Z/n_iZ$.
\end{corollary}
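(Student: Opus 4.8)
The plan is to read off the short exact sequence directly from the final commutative square in the preceding remark, specialized to the case where the coarse space is $\P^1$. First I would record the two Picard group computations that feed into that square: since $\Pic(\P^1)\simeq\Z$, the bottom-left corner is $\Z$, and since $\sX$ has $r$ stacky points with residual gerbes $B\mu_{n_i}$, the top-left corner $\oplus_i\Hom(M(\Spec k),\Z)$ is $\Z^{\oplus r}$. The remark identifies the top horizontal map $\Z^{\oplus r}\to\Z^{\oplus r}$ as multiplication by $(n_1,\dots,n_r)$ componentwise, and the vertical maps send the standard generators to the line bundles on $X$ and $\sX$ attached to the connected components. The key point is that the whole square assembles into a long exact sequence of Picard groups coming from the cartesian diagram of motives in the Proposition, applied with $\Hom(-,\Z(1)[2])$ and Voevodsky cancellation together with $\Hom(M(Y),\Z(1)[2])\simeq\Pic(Y)$.

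Next I would extract the map $j\colon\Pic(\sX)\to\prod_i\Z/n_i\Z$ and the map $i\colon\Pic(X)\to\Pic(\sX)$. Concretely, a line bundle on $\sX$ restricts to each residual gerbe $B\mu_{n_i}$, and $\Pic(B\mu_{n_i})\simeq\Z/n_i\Z$ records the weight of the $\mu_{n_i}$-action on the fibre; this gives $j$ as the product of the canonical quotient maps. The image of $i$ consists exactly of those line bundles whose stabilisers act trivially on their fibres, which is the content noted in the remark, so $i$ is injective and its image is the kernel of $j$. To pin down $i(1)$, I would observe that a line bundle pulled back from $\P^1$ has trivial weight at every stacky point, so it must have order dividing each $n_i$ in the respective $\Z/n_i\Z$; the generator $\sO(1)$ of $\Pic(\P^1)=\Z$ maps to an element whose image under $j$ is trivial precisely when its multiplicity is a common multiple of all the $n_i$. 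The minimal such is $m=\mathrm{lcm}(n_1,\dots,n_r)$, which fixes $i(1)=m$ after choosing the generator of $\Pic(\sX)\simeq\Z$ appropriately.

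Finally I would verify exactness at all three spots and surjectivity of $j$. Surjectivity of $j$ is the assertion that every collection of weights at the stacky points is realised by a line bundle on $\sX$; this follows because the stacky structure at each point contributes an independent $\Z/n_i\Z$ worth of twists, matching the product on the right. Exactness in the middle, $\ker(j)=\im(i)$, is exactly the statement that line bundles trivial at every gerbe are the ones descending from $\P^1$, again from the remark's description of the image. I expect the main obstacle to be the bookkeeping around the identification $i(1)=m=\mathrm{lcm}(n_1,\dots,n_r)$: one has to check that the composite $\Z\xrightarrow{(n_1,\dots,n_r)}\Z^{\oplus r}\to\Pic(\sX)$ from the top-then-right path of the square agrees, under the isomorphism $\Pic(\sX)\simeq\Z$, with multiplication by the least common multiple rather than by the individual $n_i$, which requires tracking how the $r$ generators collapse onto the single generator of $\Pic(\sX)$. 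Once this compatibility is confirmed, the stated sequence $1\to\Z\xrightarrow{i}\Z\xrightarrow{j}\prod_{i=1}^r\Z/n_i\Z\to1$ drops out as the Picard-group incarnation of the cartesian square of motives.
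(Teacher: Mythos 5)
Your route is genuinely different from the paper's, and part of it is sound: you construct $j$ by restricting a line bundle to the residual gerbes $B\mu_{n_i}$ (with $\Pic(B\mu_{n_i})\simeq \Z/n_i\Z$ recording the weight of the stabiliser action on fibres), identify $\ker(j)$ with the bundles descending to the coarse space, and check surjectivity of $j$ using the divisor classes of the stacky points. That gives the exact sequence $0\to \Pic(\P^1)\to \Pic(\sX)\to \prod_i\Z/n_i\Z\to 0$, which is essentially Lopez's original geometric argument. The paper's proof never mentions restriction maps at all: it presents $\Pic(\sX)$ as an explicit pushout of abelian groups, $\Pic(\sX)\simeq(\Z\oplus\Z^{\oplus r})/\langle(1,0,\dots,0,-n_i,0,\dots,0)\rangle$, and then exhibits the concrete map $(a,b_1,\dots,b_r)\mapsto ma+\sum_i (m/n_i)\,b_i$ to $\Z$.

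However, your proposal has a genuine gap, and it sits exactly where you flag ``bookkeeping'': you never prove that $\Pic(\sX)$ is infinite cyclic. You repeatedly invoke ``the isomorphism $\Pic(\sX)\simeq\Z$'' and ``choosing the generator of $\Pic(\sX)\simeq\Z$ appropriately'', but nothing in your argument produces such an isomorphism --- your exact sequence determines $\Pic(\sX)$ only up to an extension problem, and identifying the middle term is the entire content of the corollary beyond Lopez's sequence. It is also precisely the step that cannot be carried out in the stated generality: $\Pic(\sX)$ is torsion-free if and only if the $n_i$ are pairwise coprime. Concretely, two stacky points of orders $n_1,n_2$ give $\Pic(\sX)\simeq\Z\oplus\Z/\gcd(n_1,n_2)\Z$; for instance $[\P^1/\mu_2]$ (two points of order $2$, the $g=0$ case of the paper's own hyperelliptic example) has $\Pic(\sX)\simeq\Z\oplus\Z/2\Z$. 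A cardinality count shows the same obstruction: exactness of $1\to\Z\xrightarrow{m}\Z\to\prod_i\Z/n_i\Z\to 1$ would force $\prod_i n_i=m=\mathrm{lcm}(n_1,\dots,n_r)$, i.e. pairwise coprimality. (To be fair, the paper's own final step suffers from the same defect: the displayed map above is claimed to be ``easily seen to be an isomorphism'', but it is one only under that coprimality hypothesis.) So your argument, like the paper's, establishes the corollary only for pairwise coprime $n_i$, and deferring the identification $\Pic(\sX)\simeq\Z$ to bookkeeping conceals exactly the point where this hypothesis is needed.
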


\begin{proof}
	We use the concrete description of pushouts in the category of abelian groups, namely given a pushout square of abelian groups
	\begin{center}
		\begin{tikzcd}
			A \arrow[r,"f"]\arrow[d,"g"] &  B\arrow[d]\\
			C \arrow[r] & D
		\end{tikzcd}
	\end{center} then $D \simeq (B \oplus C)/f-g$. Applying this to the square we have $\Pic(\sX) \simeq \Pic(\P^1) \oplus \Z^{\oplus r}/f-g$ where $f:  \Z^{\oplus r} \to \Pic(\P^1) \simeq \Z$ is given by $f(x_1, \cdots, x_r) = x_1 + \cdots +x_r$ and $g:  \Z^{\oplus r} \to \Z^{\oplus r} $ is given by $g(x_1, \cdots, x_r) = (n_1x_1, \cdots, n_rx_r)$. Define $\Pic(\sX) \to\Z$ by sending $(a_1, b_1, \cdots b_r) \mapsto m.a_1 + m/n_1.b_1 + \cdots + m/n_r.b_r$ which is easily seen to be an isomorphism and rest of the result follows.
\end{proof}

\end{document}